\definecolor{sqsqsq}{rgb}{0.12549019607843137,0.12549019607843137,0.12549019607843137}
\definecolor{zzttqq}{rgb}{0.6,0.2,0}
\definecolor{uququq}{rgb}{0.25098039215686274,0.25098039215686274,0.25098039215686274}
\definecolor{ffffff}{rgb}{1,1,1}
\theoremstyle{plain}
\newtheorem{theoreme}{Theorem}[section]
\newtheorem{lem}[theoreme]{Lemma}
\theoremstyle{remark}
\newtheorem{ex}[theoreme]{Example}
\newtheorem{c-ex}[theoreme]{Counter-example}
\newcommand{\field}{\mathbb{C}}
\date{}
\def\pr{\ast}
\DeclareMathOperator{\supp}{supp}
\DeclareMathOperator{\type-cyclique}{type-cyclique}
\DeclareMathOperator{\coset-type}{coset-type}
\DeclareMathOperator{\set-type}{set-type}
\DeclareMathOperator{\Proj}{Proj}
\DeclareMathOperator{\ch}{ch}
\DeclareMathOperator{\ct}{ct}
\DeclareMathOperator{\ty}{type}
\DeclareMathOperator{\SC}{SC}
\DeclareMathOperator{\NCSym}{NCSym}
\DeclareMathOperator{\I}{I}
\DeclareMathOperator{\tr}{tr}
\DeclareMathOperator{\sign}{sign}
\DeclareMathOperator{\Fix}{Fix}
\DeclareMathOperator{\Ind}{Ind}
\DeclareMathOperator{\Res}{Res}
\DeclareMathOperator{\End}{End}
\DeclareMathOperator{\Mat}{Mat}
\DeclareMathOperator{\vect}{Vect}
\DeclareMathOperator{\diag}{diag}
\DeclareMathOperator{\Id}{Id}
\DeclareMathOperator{\id}{id}
\newcommand{\Hecke}{$\mathbb{C}[\mathcal{B}_n\setminus \mathcal{S}_{2n}/\mathcal{B}_n]$}
\newcommand{\hecke}{$(\mathcal{S}_{kn},\mathcal{B}_{kn}^k)$ }
\author[O. Tout]{Omar Tout}
\address{Department of Mathematics, College of Science, Sultan Qaboos University, P. O Box 36, Al Khod 123, Sultanate of Oman}
\email{o.tout@squ.edu.om}
\title[Domination number through space projections]{On the domination number of the cartesian product of\\ the path graph and any pair of graphs}
\keywords{Cartesian product, domination number, Vizing's conjecture, Clark and Suen bound}
\subjclass[2010]{ 05C69, 05C76.}
\begin{document}
\maketitle

\begin{abstract} It is known that for any graph $G,$ $\gamma (G\square P_2)\geq \gamma (G)$ where $\gamma$ stands for the domination number, $\square$ for the cartesian product and $P_2$ is the path graph on two vertices. In an attempt to prove Vizing's conjecture, Clark and Suen proved in $2000$ that $\gamma (X\square Y)\geq \frac{1}{2}\gamma (X)\gamma (Y)$ for any pair of graphs $X$ and $Y.$ Combining these two inequalities, we have $\gamma (X\square Y\square P_2)\geq \frac{1}{2}\gamma (X)\gamma (Y).$ In this paper, we use space projections to improve this lower bound and show that $\gamma (X\square Y\square P_2)\geq \frac{2}{3}\gamma (X)\gamma (Y)$ for any pair of graphs $X$ and $Y.$ In addition, we prove that $\gamma (X\square Y\square P_{n})\geq c_n\gamma (X)\gamma (Y)\gamma (P_{n}),$ where $c_n$ is almost $\frac{3}{4}$ when $n$ is big enough. 
\end{abstract}

\bigskip


Let $G$ be a simple finite graph and let $V(G)$ be its set of vertices. We say that a vertex $u\in V (G)$ dominates a vertex $v$ if $u=v$ or $v$ is adjacent to $u.$ A dominating set of $G,$ is a subset $S$ of $V(G)$ whose vertices dominate all the vertices of $G.$ The domination number of $G,$ denoted $\gamma (G),$ is the size of a smallest dominating set of $G.$ 

The Cartesian product $X\square Y$ of two graphs $X$ and $Y$ is the graph whose vertex set is $V(X)\times V(Y)$ and edge set defined as follows. Two vertices $(x_1,y_1)$ and $(x_2,y_2)$ are adjacent in $X\square Y$ if either $x_1=x_2$ and $y_1$ and $y_2$ are adjacent in $Y,$ or $y_1=y_2$ and $x_1$ and $x_2$ are adjacent in $X.$ By definition, the Cartesian product of graphs is commutative, in the sence that $X\square Y$ is isomorphic to $Y\square X.$ For $y\in V(Y),$ the subgraph of $X\square Y$ induced by $\lbrace (x,y)\mid x\in V(X)\rbrace,$ is called an $X$-fiber and denoted $X^y.$

It is well known, see \cite{6} for example, that for any graph $G,$ $\gamma (G\square P_2)\geq \gamma (G)$ where $P_2$ is the path graph on two vertices. In an attempt to prove Vizing's conjecture, which appeared in \cite{1}, Clark and Suen proved in \cite{2} that $\gamma (X\square Y)\geq \frac{1}{2}\gamma (X)\gamma (Y)$ for any pair of graphs $X$ and $Y.$ This lower bound was improved by Suen and Tarr in \cite{3}, and recently by Zerbib in \cite{4}. However, $\frac{1}{2}$ remains the best coefficient obtained toward proving Vizing's conjecture. In Theorem \ref{Main Theorem}, we show that $\gamma (X\square Y\square P_2)\geq \frac{2}{3}\gamma (X)\gamma (Y).$ To prove this inequality, we use space projections and we follow the new framework to approach Vizing's conjecture developed in \cite{5}. Then in Theorem \ref{Main Theorem 2}, we prove that for any $n\geq 1,$ $\gamma (X\square Y\square P_{n})\geq c_n\gamma (X)\gamma (Y)\gamma (P_{n})$ where $c_n$ is almost $\frac{3}{4}$ when $n$ is big enough.


\bigskip

For a natural number $k,$ the set $\lbrace 1,\cdots ,k\rbrace$ will be denote by $[k].$ Let $X$ be a fixed graph with $\gamma (X)=k.$ Consider $\lbrace u_1,\cdots ,u_k\rbrace$ to be a minimum dominating set of $X.$ Let $\lbrace \pi_{i}; i\in [k]\rbrace$ be a partition of $V(X)$ chosen so that $u_i\in \pi_{i}$ and $\pi_{i}\subseteq N[u_i]$ for any $i\in [k],$ where $N[x]$ is the set containing $x$ and all the vertices adjacent to it in $X.$ Let $Y$ and $Z$ be two arbitrary graphs. In the cartesian product $X\square Y\square Z,$ for any $i\in [k]$ and any $(y,z)\in V(Y)\times V(Z),$ let $\pi_{i}^{y,z}$ be the following set of vertices
\[
\pi_{i}^{y,z}:=\lbrace(a,y,z); a\in \pi_i \rbrace.
\]
In what follows, the sets $\pi_{i}^{y,z}$ will be called cells. It would be clear that the set of all cells make a partition for $V(X\square Y\square Z).$

Now let $D$ be a minimum dominating set of $X\square Y\square Z.$ A vertex $(a,b,c)$ in $X\square Y\square Z$ is said to be $X$-dominated (resp. $Y$-dominated, $Z$-dominated) if $(a,b,c)$ is dominated by a vertex $(a',b,c)\in D$ (resp. $(a,b',c)\in D,$ $(a,b,c')\in D$). We color the cells $\pi_{i}^{y,z}$ with eight colors as follows:
\begin{enumerate}
\item[•] The cell $\pi_{i}^{y,z}$ is blue if $D\cap \pi_{i}^{y,z}\neq \emptyset$ and none of its vertices are $Y$-dominated or $Z$-dominated (then all of its vertices are exclusively $X$-dominated).
\item[•] The cell $\pi_{i}^{y,z}$ is green if $D\cap \pi_{i}^{y,z}\neq \emptyset,$ is not blue, and none of its vertices are $Z$-dominated (then all of its vertices are exclusively $X$-dominated and $Y$-dominated).
\item[•] The cell $\pi_{i}^{y,z}$ is yellow if $D\cap \pi_{i}^{y,z}\neq \emptyset,$ is not blue, and none of its vertices are $Y$-dominated (then all of its vertices are exclusively $X$-dominated and $Z$-dominated).
\item[•] The cell $\pi_{i}^{y,z}$ is orange if $D\cap \pi_{i}^{y,z}\neq \emptyset$ and is not blue, green or yellow.
\item[•] The cell $\pi_{i}^{y,z}$ is red if $D\cap \pi_{i}^{y,z}= \emptyset$ and none of its vertices are $Y$-dominated or $Z$-dominated.
\item[•] The cell $\pi_{i}^{y,z}$ is pink if $D\cap \pi_{i}^{y,z}= \emptyset,$ is not red, and none of its vertices are $Z$-dominated.
\item[•] The cell $\pi_{i}^{y,z}$ is maroon if $D\cap \pi_{i}^{y,z}= \emptyset,$ is not red, and none of its vertices are $Y$-dominated.
\item[•] All the remaining cells are colored white.
\end{enumerate}

\begin{ex}\label{Main example} Consider the three graphs $X, Y$ and $Z$ with vertex sets \newline $V(X)=\lbrace x_1,x_2,x_3,x_4,x_5,x_6,x_7,x_8\rbrace,$ $V(Y)=\lbrace y_1,y_2,y_3,y_4\rbrace,$ $V(Z)=\lbrace z_1,z_2\rbrace$ respectively and edge sets defined as follows 
$$E(X)=\lbrace x_1x_2,x_2x_3,x_2x_6,x_3x_4,x_4x_5,x_5x_6,x_5x_8,x_6x_7,x_7x_8\rbrace,$$ 
$$E(Y)=\lbrace y_1y_2,y_1y_4,y_2y_3,y_3y_4\rbrace,$$ 
and 
$$E(Z)=\lbrace z_1z_2\rbrace.$$ 
It would be easy to check that the domination number of $X$ is $3,$ of $Y$ is $2$ and that of $Z$ is $1.$ Consider the dominating set $\lbrace x_2,x_5,x_7\rbrace$ of $X.$ Let $\lbrace \pi_{1}, \pi_{2},\pi_{3}\rbrace$ be the partition of $V(X)$ where:
$$\pi_{1}=\lbrace x_1,x_2,x_3\rbrace,$$
$$\pi_{2}=\lbrace x_4,x_5\rbrace$$
and
$$\pi_{3}=\lbrace x_6,x_7,x_8\rbrace.$$

We used SageMath \cite{7} to verify that the domination number of $X\square Y\square Z$ is $13$ and that a minimal dominating set is given by 
$$D=\lbrace (x_1, y_4, z_1),
 (x_2, y_2,z_1),
 (x_4, y_4,z_1),
 (x_5, y_2,z_1),
 (x_7, y_1,z_1),
 (x_7, y_3,z_1),
 (x_8, y_4,z_1),$$
 $$(x_1, y_2,z_2),
 (x_3, y_1,z_2),
 (x_3, y_3,z_2),
(x_5, y_2,z_2),
 (x_6, y_4,z_2), 
 (x_8, y_2,z_2)\rbrace.$$

In Figure \ref{fig:main fig}, we show the coloring of the cells with eight colors as well as the coloring of the vertices of $D$ with the four colors blue, green, yellow and orange. For visibility, we used dashed lines for white colored cells. The cells in the bottom correspond to the layer $(X\square Y)^{z_1}$ while on the top figures the cells of the layer $(X\square Y)^{z_2}.$ We omit the edges in the cartesian product $X\square Y\square Z$ to make the picture easily readable. We strongly recommend the reader to avoid using black and white printing for Figure \ref{fig:main fig} and use instead color printing in order to obtain a clear, not confusing, picture.

The reader is invited to check that the colors given to cells agrees with our construction. For example, the cell $\pi_{1}^{y_4,z_1}$ is colored blue since it contains a vertex of $D$ and all its vertices are $X$-dominated and no one of them is $Y$ or $Z$-dominated. The cell $\pi_{3}^{y_1,z_1}$ is colored green since it contains a vertex of $D$ and its vertex $(x_8,y_1,z_1)$ is $Y$-dominated by $(x_8,y_4,z_1)$ and none of its vertices is $Z$-dominated. Also, $\pi_{1}^{y_2,z_1}$ is colored yellow because it contains vertices of $D$ and its vertices are $X$-dominated or $Z$-dominated and none of them is $Y$-dominated while $\pi_{1}^{y_2,z_2}$ is colored orange because it contains a vertex of $D$ and it has vertices $X$-dominated, $Y$-dominated and $Z$-dominated. In addition, $\pi_{1}^{y_1,z_1}$ is colored white (dashed for visibility) because it contains no vertex of $D$ and it has vertices $Y$-dominated and vertices $Z$-dominated. The cell $\pi_{2}^{y_1,z_1}$ is colored pink because it contains no vertex of $D$ and its vertices are $X$ or $Y$-dominated and none of them is $Z$-dominated. Finally, $\pi_{2}^{y_4,z_2}$ is colored maroon because it contains no vertex of $D$ and its vertices are $X$ or $Z$-dominated and none of them is $Y$-dominated. The provided example lacks red cells.

\definecolor{zzttqq}{rgb}{0.6,0.2,0}
\definecolor{fuqqzz}{rgb}{0.9568627450980393,0,0.6}
\definecolor{cqcqcq}{rgb}{0.7529411764705882,0.7529411764705882,0.7529411764705882}
\definecolor{ffwwqq}{rgb}{1,0.4,0}
\definecolor{qqffqq}{rgb}{0,1,0}
\definecolor{ffffww}{rgb}{1,1,0.4}
\definecolor{ffffff}{rgb}{1,1,1}
\definecolor{qqqqff}{rgb}{0,0,1}

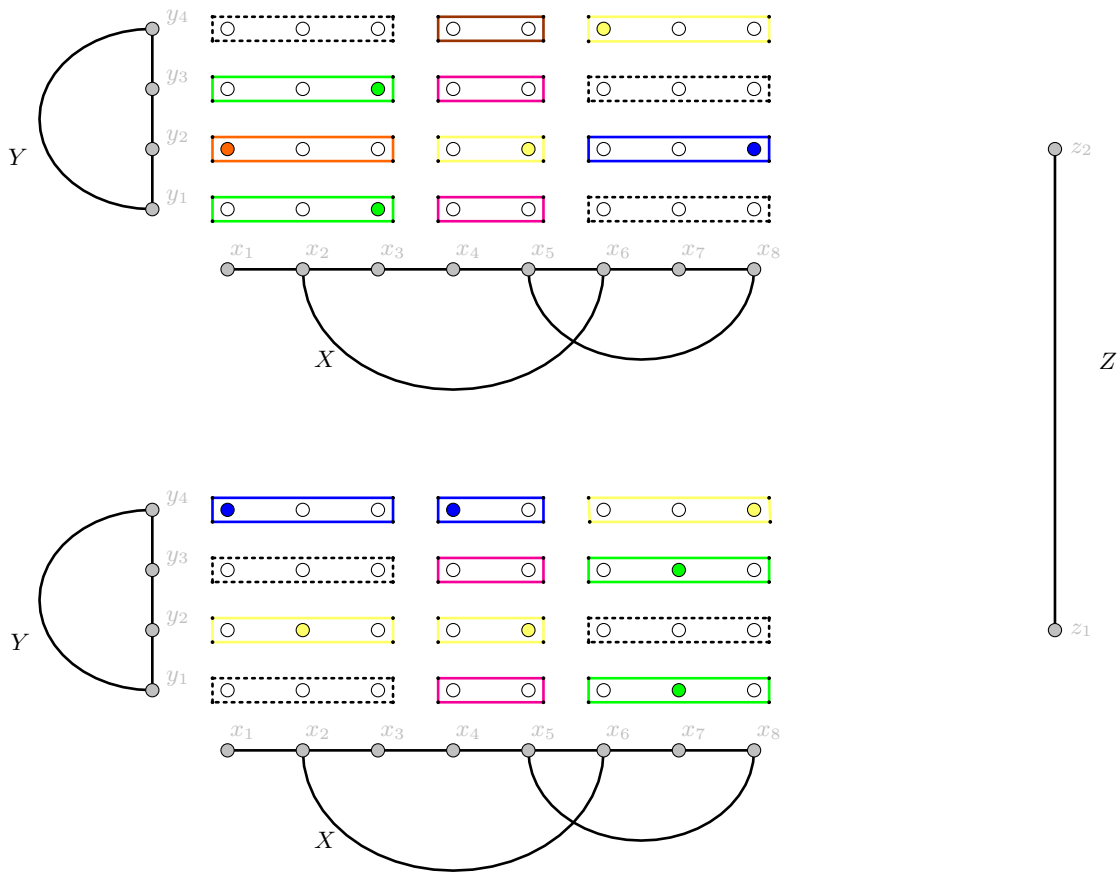
\begin{figure}[htbp]
\begin{center}
\begin{tikzpicture}[line cap=round,line join=round,>=triangle 45,x=1cm,y=.8cm]
\draw [line width=1pt] (-14,1)-- (-13,1);
\draw [line width=1pt] (-13,1)-- (-12,1);
\draw [line width=1pt] (-12,1)-- (-11,1);
\draw [line width=1pt] (-11,1)-- (-10,1);
\draw [line width=1pt] (-10,1)-- (-9,1);
\draw [line width=1pt] (-9,1)-- (-8,1);
\draw [line width=1pt] (-8,1)-- (-7,1);
\draw [line width=1pt] (-15,2)-- (-15,3);
\draw [line width=1pt] (-15,3)-- (-15,4);
\draw [line width=1pt] (-15,4)-- (-15,5);
\draw [line width=1pt] (-14,9)-- (-13,9);
\draw [line width=1pt] (-13,9)-- (-12,9);
\draw [line width=1pt] (-12,9)-- (-11,9);
\draw [line width=1pt] (-11,9)-- (-10,9);
\draw [line width=1pt] (-10,9)-- (-9,9);
\draw [line width=1pt] (-9,9)-- (-8,9);
\draw [line width=1pt] (-8,9)-- (-7,9);
\draw [line width=1pt] (-15,10)-- (-15,11);
\draw [line width=1pt] (-15,11)-- (-15,12);
\draw [line width=1pt] (-15,12)-- (-15,13);
\draw [line width=1pt] (-3,3)-- (-3,11);
\draw [shift={(-8.5,9)},line width=1pt]  plot[domain=3.141592653589793:6.283185307179586,variable=\t]({1*1.5*cos(\t r)+0*1.5*sin(\t r)},{0*1.5*cos(\t r)+1*1.5*sin(\t r)});
\draw [shift={(-11,9)},line width=1pt]  plot[domain=3.141592653589793:6.283185307179586,variable=\t]({1*2*cos(\t r)+0*2*sin(\t r)},{0*2*cos(\t r)+1*2*sin(\t r)});
\draw [shift={(-15,11.5)},line width=1pt]  plot[domain=1.5707963267948966:4.71238898038469,variable=\t]({1*1.5*cos(\t r)+0*1.5*sin(\t r)},{0*1.5*cos(\t r)+1*1.5*sin(\t r)});
\draw [shift={(-8.5,1)},line width=1pt]  plot[domain=3.141592653589793:6.283185307179586,variable=\t]({1*1.5*cos(\t r)+0*1.5*sin(\t r)},{0*1.5*cos(\t r)+1*1.5*sin(\t r)});
\draw [shift={(-11,1)},line width=1pt]  plot[domain=3.141592653589793:6.283185307179586,variable=\t]({1*2*cos(\t r)+0*2*sin(\t r)},{0*2*cos(\t r)+1*2*sin(\t r)});
\draw [shift={(-15,3.5)},line width=1pt]  plot[domain=1.5707963267948966:4.71238898038469,variable=\t]({1*1.5*cos(\t r)+0*1.5*sin(\t r)},{0*1.5*cos(\t r)+1*1.5*sin(\t r)});
\draw [line width=1pt,color=qqffqq] (-14.2,9.8)-- (-11.8,9.8);
\draw [line width=1pt,color=qqffqq] (-11.8,9.8)-- (-11.8,10.2);
\draw [line width=1pt,color=qqffqq] (-11.8,10.2)-- (-14.2,10.2);
\draw [line width=1pt,color=qqffqq] (-14.2,10.2)-- (-14.2,9.8);
\draw [line width=1pt,color=fuqqzz] (-11.2,10.2)-- (-9.8,10.2);
\draw [line width=1pt,color=fuqqzz] (-9.8,10.2)-- (-9.8,9.8);
\draw [line width=1pt,color=fuqqzz] (-9.8,9.8)-- (-11.2,9.8);
\draw [line width=1pt,color=fuqqzz] (-11.2,9.8)-- (-11.2,10.2);
\draw [line width=1pt,dotted] (-9.2,10.2)-- (-6.8,10.2);
\draw [line width=1pt,dotted] (-6.8,10.2)-- (-6.8,9.8);
\draw [line width=1pt,dotted] (-6.8,9.8)-- (-9.2,9.8);
\draw [line width=1pt,dotted] (-9.2,9.8)-- (-9.2,10.2);
\draw [line width=1pt,color=zzttqq] (-11.2,13.2)-- (-9.8,13.2);
\draw [line width=1pt,color=zzttqq] (-9.8,13.2)-- (-9.8,12.8);
\draw [line width=1pt,color=zzttqq] (-9.8,12.8)-- (-11.2,12.8);
\draw [line width=1pt,color=zzttqq] (-11.2,12.8)-- (-11.2,13.2);
\draw [line width=1pt,color=ffffww] (-9.2,13.2)-- (-6.8,13.2);
\draw [line width=1pt,color=ffffww] (-6.8,13.2)-- (-6.8,12.790919741852175);
\draw [line width=1pt,color=ffffww] (-6.8,12.790919741852175)-- (-9.2,12.790919741852175);
\draw [line width=1pt,color=ffffww] (-9.2,12.790919741852175)-- (-9.2,13.2);
\draw [line width=1pt,dotted] (-14.2,13.2)-- (-11.8,13.2);
\draw [line width=1pt,dotted] (-11.8,13.2)-- (-11.8,12.8);
\draw [line width=1pt,dotted] (-11.8,12.8)-- (-14.2,12.8);
\draw [line width=1pt,dotted] (-14.2,12.8)-- (-14.2,13.2);
\draw [line width=1pt,color=qqqqff] (-14.2,5.2)-- (-11.8,5.2);
\draw [line width=1pt,color=qqqqff] (-11.8,5.2)-- (-11.8,4.8);
\draw [line width=1pt,color=qqqqff] (-11.8,4.8)-- (-14.2,4.8);
\draw [line width=1pt,color=qqqqff] (-14.2,4.8)-- (-14.2,5.2);
\draw [line width=1pt,color=qqqqff] (-11.2,5.2)-- (-9.8,5.2);
\draw [line width=1pt,color=qqqqff] (-9.8,5.2)-- (-9.8,4.8);
\draw [line width=1pt,color=qqqqff] (-9.8,4.8)-- (-11.2,4.8);
\draw [line width=1pt,color=qqqqff] (-11.2,4.8)-- (-11.2,5.2);
\draw [line width=1pt,color=ffffww] (-9.2,5.2)-- (-6.8,5.2);
\draw [line width=1pt,color=ffffww] (-6.8,5.2)-- (-6.78484895589595,4.799937821734619);
\draw [line width=1pt,color=ffffww] (-9.18484895589595,4.799937821734619)-- (-9.2,5.2);
\draw [line width=1pt,color=ffffww] (-9.18484895589595,4.799937821734619)-- (-6.78484895589595,4.799937821734619);
\draw [line width=1pt,color=fuqqzz] (-11.2,2.2)-- (-9.8,2.2);
\draw [line width=1pt,color=fuqqzz] (-9.8,2.2)-- (-9.8,1.8);
\draw [line width=1pt,color=fuqqzz] (-9.8,1.8)-- (-11.2,1.8);
\draw [line width=1pt,color=fuqqzz] (-11.2,1.8)-- (-11.2,2.2);
\draw [line width=1pt,color=qqffqq] (-9.2,2.2)-- (-6.8,2.2);
\draw [line width=1pt,color=qqffqq] (-6.8,2.2)-- (-6.8,1.8);
\draw [line width=1pt,color=qqffqq] (-6.8,1.8)-- (-9.2,1.8);
\draw [line width=1pt,color=qqffqq] (-9.2,1.8)-- (-9.2,2.2);
\draw [line width=1pt,dotted] (-14.2,2.2)-- (-11.8,2.2);
\draw [line width=1pt,dotted] (-11.8,2.2)-- (-11.8,1.8);
\draw [line width=1pt,dotted] (-11.8,1.8)-- (-14.2,1.8);
\draw [line width=1pt,dotted] (-14.2,1.8)-- (-14.2,2.2);
\draw [line width=1pt,color=ffffww] (-14.2,3.2)-- (-11.8,3.2);
\draw [line width=1pt,color=ffffww] (-11.8,3.2)-- (-11.8,2.8);
\draw [line width=1pt,color=ffffww] (-11.8,2.8)-- (-14.2,2.8);
\draw [line width=1pt,color=ffffww] (-14.2,2.8)-- (-14.2,3.2);
\draw [line width=1pt,color=ffffww] (-11.2,3.2)-- (-9.8,3.2);
\draw [line width=1pt,color=ffffww] (-9.8,3.2)-- (-9.8,2.8);
\draw [line width=1pt,color=ffffww] (-9.8,2.8)-- (-11.2,2.8);
\draw [line width=1pt,color=ffffww] (-11.2,2.8)-- (-11.2,3.2);
\draw [line width=1pt,color=qqffqq] (-9.2,4.2)-- (-6.8,4.2);
\draw [line width=1pt,color=qqffqq] (-6.8,4.2)-- (-6.8,3.8);
\draw [line width=1pt,color=qqffqq] (-6.8,3.8)-- (-9.2,3.8);
\draw [line width=1pt,color=qqffqq] (-9.2,3.8)-- (-9.2,4.2);
\draw [line width=1pt,color=qqffqq] (-14.2,12.2)-- (-11.8,12.2);
\draw [line width=1pt,color=qqffqq] (-11.8,12.2)-- (-11.8,11.8);
\draw [line width=1pt,color=qqffqq] (-11.8,11.8)-- (-14.2,11.8);
\draw [line width=1pt,color=qqffqq] (-14.2,11.8)-- (-14.2,12.2);
\draw [line width=1pt,color=ffwwqq] (-14.2,11.2)-- (-11.8,11.2);
\draw [line width=1pt,color=ffwwqq] (-11.8,11.2)-- (-11.8,10.8);
\draw [line width=1pt,color=ffwwqq] (-11.8,10.8)-- (-14.2,10.8);
\draw [line width=1pt,color=ffwwqq] (-14.2,10.8)-- (-14.2,11.2);
\draw [line width=1pt,color=ffffww] (-11.2,11.2)-- (-9.8,11.2);
\draw [line width=1pt,color=ffffww] (-9.8,11.2)-- (-9.8,10.8);
\draw [line width=1pt,color=ffffww] (-9.8,10.8)-- (-11.2,10.8);
\draw [line width=1pt,color=ffffww] (-11.2,10.8)-- (-11.2,11.2);
\draw [line width=1pt,color=qqqqff] (-9.2,11.2)-- (-6.8,11.2);
\draw [line width=1pt,color=qqqqff] (-6.8,11.2)-- (-6.8,10.8);
\draw [line width=1pt,color=qqqqff] (-6.8,10.8)-- (-9.2,10.8);
\draw [line width=1pt,color=qqqqff] (-9.2,10.8)-- (-9.2,11.2);
\draw [line width=1pt,dotted] (-9.2,3.2)-- (-6.8,3.2);
\draw [line width=1pt,dotted] (-6.8,3.2)-- (-6.8,2.8);
\draw [line width=1pt,dotted] (-6.8,2.8)-- (-9.2,2.8);
\draw [line width=1pt,dotted] (-9.2,2.8)-- (-9.2,3.2);
\draw [line width=1pt,dotted] (-14.2,4.2)-- (-11.8,4.2);
\draw [line width=1pt,dotted] (-11.8,4.2)-- (-11.8,3.8);
\draw [line width=1pt,dotted] (-11.8,3.8)-- (-14.2,3.8);
\draw [line width=1pt,dotted] (-14.2,3.8)-- (-14.2,4.2);
\draw [line width=1pt,color=fuqqzz] (-11.2,4.2)-- (-9.8,4.2);
\draw [line width=1pt,color=fuqqzz] (-9.8,4.2)-- (-9.8,3.8);
\draw [line width=1pt,color=fuqqzz] (-9.8,3.8)-- (-11.2,3.8);
\draw [line width=1pt,color=fuqqzz] (-11.2,3.8)-- (-11.2,4.2);
\draw [line width=1pt,color=fuqqzz] (-11.2,12.2)-- (-9.8,12.2);
\draw [line width=1pt,color=fuqqzz] (-9.8,12.2)-- (-9.8,11.8);
\draw [line width=1pt,color=fuqqzz] (-9.8,11.8)-- (-11.2,11.8);
\draw [line width=1pt,color=fuqqzz] (-11.2,11.8)-- (-11.2,12.2);
\draw [line width=1pt,dotted] (-9.2,12.2)-- (-6.8,12.2);
\draw [line width=1pt,dotted] (-6.8,12.2)-- (-6.8,11.8);
\draw [line width=1pt,dotted] (-6.8,11.8)-- (-9.2,11.8);
\draw [line width=1pt,dotted] (-9.2,11.8)-- (-9.2,12.2);
\begin{scriptsize}
\draw [fill=qqqqff] (-14,5) circle (2.5pt);
\draw [fill=ffffff] (-13,5) circle (2.5pt);
\draw [fill=ffffff] (-12,5) circle (2.5pt);
\draw [fill=qqqqff] (-11,5) circle (2.5pt);
\draw [fill=ffffff] (-10,5) circle (2.5pt);
\draw [fill=ffffff] (-9,5) circle (2.5pt);
\draw [fill=ffffff] (-8,5) circle (2.5pt);
\draw [fill=ffffww] (-7,5) circle (2.5pt);
\draw [fill=ffffff] (-7,4) circle (2.5pt);
\draw [fill=qqffqq] (-8,4) circle (2.5pt);
\draw [fill=ffffff] (-9,4) circle (2.5pt);
\draw [fill=ffffff] (-10,4) circle (2.5pt);
\draw [fill=ffffff] (-11,4) circle (2.5pt);
\draw [fill=ffffff] (-12,4) circle (2.5pt);
\draw [fill=ffffff] (-13,4) circle (2.5pt);
\draw [fill=ffffff] (-14,4) circle (2.5pt);
\draw [fill=ffffff] (-14,3) circle (2.5pt);
\draw [fill=ffffww] (-13,3) circle (2.5pt);
\draw [fill=ffffff] (-12,3) circle (2.5pt);
\draw [fill=ffffff] (-11,3) circle (2.5pt);
\draw [fill=ffffww] (-10,3) circle (2.5pt);
\draw [fill=ffffff] (-9,3) circle (2.5pt);
\draw [fill=ffffff] (-8,3) circle (2.5pt);
\draw [fill=ffffff] (-7,3) circle (2.5pt);
\draw [fill=ffffff] (-7,2) circle (2.5pt);
\draw [fill=qqffqq] (-8,2) circle (2.5pt);
\draw [fill=ffffff] (-9,2) circle (2.5pt);
\draw [fill=ffffff] (-10,2) circle (2.5pt);
\draw [fill=ffffff] (-11,2) circle (2.5pt);
\draw [fill=ffffff] (-12,2) circle (2.5pt);
\draw [fill=ffffff] (-13,2) circle (2.5pt);
\draw [fill=ffffff] (-14,2) circle (2.5pt);
\draw [fill=ffffff] (-14,10) circle (2.5pt);
\draw [fill=ffffff] (-13,10) circle (2.5pt);
\draw [fill=qqffqq] (-12,10) circle (2.5pt);
\draw [fill=ffffff] (-11,10) circle (2.5pt);
\draw [fill=ffffff] (-10,10) circle (2.5pt);
\draw [fill=ffffff] (-9,10) circle (2.5pt);
\draw [fill=ffffff] (-8,10) circle (2.5pt);
\draw [fill=ffffff] (-7,10) circle (2.5pt);
\draw [fill=qqqqff] (-7,11) circle (2.5pt);
\draw [fill=ffffff] (-7,12) circle (2.5pt);
\draw [fill=ffffff] (-7,13) circle (2.5pt);
\draw [fill=ffffff] (-8,13) circle (2.5pt);
\draw [fill=ffffff] (-8,12) circle (2.5pt);
\draw [fill=ffffff] (-8,11) circle (2.5pt);
\draw [fill=ffffff] (-9,11) circle (2.5pt);
\draw [fill=ffffff] (-9,12) circle (2.5pt);
\draw [fill=ffffww] (-9,13) circle (2.5pt);
\draw [fill=ffffff] (-10,13) circle (2.5pt);
\draw [fill=ffffff] (-10,12) circle (2.5pt);
\draw [fill=ffffww] (-10,11) circle (2.5pt);
\draw [fill=ffffff] (-11,11) circle (2.5pt);
\draw [fill=ffffff] (-11,12) circle (2.5pt);
\draw [fill=ffffff] (-11,13) circle (2.5pt);
\draw [fill=ffffff] (-12,13) circle (2.5pt);
\draw [fill=qqffqq] (-12,12) circle (2.5pt);
\draw [fill=ffffff] (-12,11) circle (2.5pt);
\draw [fill=ffffff] (-13,11) circle (2.5pt);
\draw [fill=ffffff] (-13,12) circle (2.5pt);
\draw [fill=ffffff] (-13,13) circle (2.5pt);
\draw [fill=ffffff] (-14,13) circle (2.5pt);
\draw [fill=ffffff] (-14,12) circle (2.5pt);
\draw [fill=ffwwqq] (-14,11) circle (2.5pt);
\draw [fill=cqcqcq] (-14,1) circle (2.5pt);
\draw[color=cqcqcq] (-13.8,1.3) node {$x_1$};
\draw [fill=cqcqcq] (-13,1) circle (2.5pt);
\draw[color=cqcqcq] (-12.8,1.3) node {$x_2$};
\draw [fill=cqcqcq] (-12,1) circle (2.5pt);
\draw[color=cqcqcq] (-11.8,1.3) node {$x_3$};
\draw [fill=cqcqcq] (-11,1) circle (2.5pt);
\draw[color=cqcqcq] (-10.8,1.3) node {$x_4$};
\draw [fill=cqcqcq] (-10,1) circle (2.5pt);
\draw[color=cqcqcq] (-9.8,1.3) node {$x_5$};
\draw [fill=cqcqcq] (-9,1) circle (2.5pt);
\draw[color=cqcqcq] (-8.8,1.3) node {$x_6$};
\draw [fill=cqcqcq] (-8,1) circle (2.5pt);
\draw[color=cqcqcq] (-7.8,1.3) node {$x_7$};
\draw [fill=cqcqcq] (-7,1) circle (2.5pt);
\draw[color=cqcqcq] (-6.8,1.3) node {$x_8$};
\draw [fill=cqcqcq] (-15,2) circle (2.5pt);
\draw[color=cqcqcq] (-14.661108931195733,2.2) node {$y_1$};
\draw [fill=cqcqcq] (-15,3) circle (2.5pt);
\draw[color=cqcqcq] (-14.661108931195733,3.2) node {$y_2$};
\draw [fill=cqcqcq] (-15,4) circle (2.5pt);
\draw[color=cqcqcq] (-14.661108931195733,4.2) node {$y_3$};
\draw [fill=cqcqcq] (-15,5) circle (2.5pt);
\draw[color=cqcqcq] (-14.661108931195733,5.2) node {$y_4$};
\draw [fill=cqcqcq] (-14,9) circle (2.5pt);
\draw[color=cqcqcq] (-13.8,9.3) node {$x_1$};
\draw [fill=cqcqcq] (-13,9) circle (2.5pt);
\draw[color=cqcqcq] (-12.8,9.3) node {$x_2$};
\draw [fill=cqcqcq] (-12,9) circle (2.5pt);
\draw[color=cqcqcq] (-11.8,9.3) node {$x_3$};
\draw [fill=cqcqcq] (-11,9) circle (2.5pt);
\draw[color=cqcqcq] (-10.8,9.3) node {$x_4$};
\draw [fill=cqcqcq] (-10,9) circle (2.5pt);
\draw[color=cqcqcq] (-9.8,9.3) node {$x_5$};
\draw [fill=cqcqcq] (-9,9) circle (2.5pt);
\draw[color=cqcqcq] (-8.8,9.3) node {$x_6$};
\draw [fill=cqcqcq] (-8,9) circle (2.5pt);
\draw[color=cqcqcq] (-7.8,9.3) node {$x_7$};
\draw [fill=cqcqcq] (-7,9) circle (2.5pt);
\draw[color=cqcqcq] (-6.8,9.3) node {$x_8$};
\draw [fill=cqcqcq] (-15,10) circle (2.5pt);
\draw[color=cqcqcq] (-14.661108931195733,10.2) node {$y_1$};
\draw [fill=cqcqcq] (-15,11) circle (2.5pt);
\draw[color=cqcqcq] (-14.661108931195733,11.2) node {$y_2$};
\draw [fill=cqcqcq] (-15,12) circle (2.5pt);
\draw[color=cqcqcq] (-14.661108931195733,12.2) node {$y_3$};
\draw [fill=cqcqcq] (-15,13) circle (2.5pt);
\draw[color=cqcqcq] (-14.661108931195733,13.2) node {$y_4$};
\draw [fill=cqcqcq] (-3,3) circle (2.5pt);
\draw[color=cqcqcq] (-2.6444678620375686,3) node {$z_1$};
\draw [fill=cqcqcq] (-3,11) circle (2.5pt);
\draw[color=cqcqcq] (-2.6444678620375686,11) node {$z_2$};
\draw[color=black] (-2.2956999154393105,7.470884219054397) node {$Z$};
\draw[color=black] (-12.7,7.502590396017874) node {$X$};
\draw[color=black] (-16.78542278774876,10.863445154146481) node {$Y$};
\draw[color=black] (-12.7,-0.4873661987784332) node {$X$};
\draw[color=black] (-16.753716610785283,2.8100762054232176) node {$Y$};
\draw [fill=black] (-14.2,9.8) circle (0.5pt);
\draw [fill=black] (-11.8,9.8) circle (0.5pt);
\draw [fill=black] (-11.8,10.2) circle (0.5pt);
\draw [fill=black] (-14.2,10.2) circle (0.5pt);
\draw [fill=black] (-11.2,10.2) circle (0.5pt);
\draw [fill=black] (-9.8,10.2) circle (0.5pt);
\draw [fill=black] (-9.8,9.8) circle (0.5pt);
\draw [fill=black] (-11.2,9.8) circle (0.5pt);
\draw [fill=black] (-9.2,10.2) circle (0.5pt);
\draw [fill=black] (-6.8,10.2) circle (0.5pt);
\draw [fill=black] (-6.8,9.8) circle (0.5pt);
\draw [fill=black] (-9.2,9.8) circle (0.5pt);
\draw [fill=black] (-11.2,13.2) circle (0.5pt);
\draw [fill=black] (-9.8,13.2) circle (0.5pt);
\draw [fill=black] (-9.8,12.8) circle (0.5pt);
\draw [fill=black] (-11.2,12.8) circle (0.5pt);
\draw [fill=black] (-9.2,13.2) circle (0.5pt);
\draw [fill=black] (-6.8,13.2) circle (0.5pt);
\draw [fill=black] (-6.8,12.790919741852175) circle (0.5pt);
\draw [fill=black] (-9.2,12.790919741852175) circle (0.5pt);
\draw [fill=black] (-14.2,13.2) circle (0.5pt);
\draw [fill=black] (-11.8,13.2) circle (0.5pt);
\draw [fill=black] (-11.8,12.8) circle (0.5pt);
\draw [fill=black] (-14.2,12.8) circle (0.5pt);
\draw [fill=black] (-14.2,5.2) circle (0.5pt);
\draw [fill=black] (-11.8,5.2) circle (0.5pt);
\draw [fill=black] (-11.8,4.8) circle (0.5pt);
\draw [fill=black] (-14.2,4.8) circle (0.5pt);
\draw [fill=black] (-11.2,5.2) circle (0.5pt);
\draw [fill=black] (-9.8,5.2) circle (0.5pt);
\draw [fill=black] (-9.8,4.8) circle (0.5pt);
\draw [fill=black] (-11.2,4.8) circle (0.5pt);
\draw [fill=black] (-9.2,5.2) circle (0.5pt);
\draw [fill=black] (-6.8,5.2) circle (0.5pt);
\draw [fill=black] (-6.78484895589595,4.799937821734619) circle (0.5pt);
\draw [fill=black] (-9.18484895589595,4.799937821734619) circle (0.5pt);
\draw [fill=black] (-11.2,2.2) circle (0.5pt);
\draw [fill=black] (-9.8,2.2) circle (0.5pt);
\draw [fill=black] (-9.8,1.8) circle (0.5pt);
\draw [fill=black] (-11.2,1.8) circle (0.5pt);
\draw [fill=black] (-9.2,2.2) circle (0.5pt);
\draw [fill=black] (-6.8,2.2) circle (0.5pt);
\draw [fill=black] (-6.8,1.8) circle (0.5pt);
\draw [fill=black] (-9.2,1.8) circle (0.5pt);
\draw [fill=black] (-14.2,2.2) circle (0.5pt);
\draw [fill=black] (-11.8,2.2) circle (0.5pt);
\draw [fill=black] (-11.8,1.8) circle (0.5pt);
\draw [fill=black] (-14.2,1.8) circle (0.5pt);
\draw [fill=black] (-14.2,3.2) circle (0.5pt);
\draw [fill=black] (-11.8,3.2) circle (0.5pt);
\draw [fill=black] (-11.8,2.8) circle (0.5pt);
\draw [fill=black] (-14.2,2.8) circle (0.5pt);
\draw [fill=black] (-11.2,3.2) circle (0.5pt);
\draw [fill=black] (-9.8,3.2) circle (0.5pt);
\draw [fill=black] (-9.8,2.8) circle (0.5pt);
\draw [fill=black] (-11.2,2.8) circle (0.5pt);
\draw [fill=black] (-9.2,4.2) circle (0.5pt);
\draw [fill=black] (-6.8,4.2) circle (0.5pt);
\draw [fill=black] (-6.8,3.8) circle (0.5pt);
\draw [fill=black] (-9.2,3.8) circle (0.5pt);
\draw [fill=black] (-14.2,12.2) circle (0.5pt);
\draw [fill=black] (-11.8,12.2) circle (0.5pt);
\draw [fill=black] (-11.8,11.8) circle (0.5pt);
\draw [fill=black] (-14.2,11.8) circle (0.5pt);
\draw [fill=black] (-14.2,11.2) circle (0.5pt);
\draw [fill=black] (-11.8,11.2) circle (0.5pt);
\draw [fill=black] (-11.8,10.8) circle (0.5pt);
\draw [fill=black] (-14.2,10.8) circle (0.5pt);
\draw [fill=black] (-11.2,11.2) circle (0.5pt);
\draw [fill=black] (-9.8,11.2) circle (0.5pt);
\draw [fill=black] (-9.8,10.8) circle (0.5pt);
\draw [fill=black] (-11.2,10.8) circle (0.5pt);
\draw [fill=black] (-9.2,11.2) circle (0.5pt);
\draw [fill=black] (-6.8,11.2) circle (0.5pt);
\draw [fill=black] (-6.8,10.8) circle (0.5pt);
\draw [fill=black] (-9.2,10.8) circle (0.5pt);
\draw [fill=black] (-9.2,3.2) circle (0.5pt);
\draw [fill=black] (-6.8,3.2) circle (0.5pt);
\draw [fill=black] (-6.8,2.8) circle (0.5pt);
\draw [fill=black] (-9.2,2.8) circle (0.5pt);
\draw [fill=black] (-14.2,4.2) circle (0.5pt);
\draw [fill=black] (-11.8,4.2) circle (0.5pt);
\draw [fill=black] (-11.8,3.8) circle (0.5pt);
\draw [fill=black] (-14.2,3.8) circle (0.5pt);
\draw [fill=black] (-11.2,4.2) circle (0.5pt);
\draw [fill=black] (-9.8,4.2) circle (0.5pt);
\draw [fill=black] (-9.8,3.8) circle (0.5pt);
\draw [fill=black] (-11.2,3.8) circle (0.5pt);
\draw [fill=black] (-11.2,12.2) circle (0.5pt);
\draw [fill=black] (-9.8,12.2) circle (0.5pt);
\draw [fill=black] (-9.8,11.8) circle (0.5pt);
\draw [fill=black] (-11.2,11.8) circle (0.5pt);
\draw [fill=black] (-9.2,12.2) circle (0.5pt);
\draw [fill=black] (-6.8,12.2) circle (0.5pt);
\draw [fill=black] (-6.8,11.8) circle (0.5pt);
\draw [fill=black] (-9.2,11.8) circle (0.5pt);
\end{scriptsize}
\end{tikzpicture}
\caption{Cell coloring for the cartesian product $X\square Y\square Z.$}
\label{fig:main fig}
\end{center}
\end{figure}

\end{ex}

Now let us denote by $Y_{i,z}$ (resp.$X_{y,z},$ $Z_{i,y}$ ) the $Y$-fiber (resp. $X$-fiber, $Z$-fiber) in $X\square Y\square Z$ formed by all the cells $\pi_{i}^{y,z}.$ Let $b'_{i,z}$ be the number of blue cells in $Y_{i,z}$ and let $b'$ be the total number of blue cells in $X \Box Y\Box Z.$ We define analogously $c'_{i,z}$ and $c'$ associated with the cells of color $c$ for any $c\in \lbrace \text{green, yellow, orange, red, pink, maroon, white}\rbrace.$ 
In a similar way, for any color $c$ of the eight colors, we define $c'_{y,z}$ for the cells in $X_{y,z},$ and $c'_{i,y}$ for the cells in $Z_{i,y}.$ 

In addition, let $b_{i,z},$ $b_{y,z}$ and $b_{i,y}$ denote the number of blue vertices in $Y_{i,z},$ $X_{y,z}$ and $Z_{i,y}$ respectively and let $b$ be the total number of blue vertices in $X \Box Y\Box Z.$ Similarly, define $g_{i,z},$ $g_{y,z},$ $g_{i,y},$ $y_{i,z},$ $y_{y,z},$ $y_{i,y},$ $o_{i,z},$ $o_{y,z}$ and $o_{i,y}.$

\begin{lem}\label{Main lemma 1} $b'+g'+y'+o'+r'+m'\geq \gamma (X) \gamma (Y) |V(Z)|.$
\end{lem}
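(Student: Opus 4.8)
The plan is to prove the inequality one cell-fiber at a time: for each fixed $i\in[k]$ and each fixed $z\in V(Z)$ I will show that the fiber $Y_{i,z}$ contains at least $\gamma(Y)$ cells coloured with one of the six colours blue, green, yellow, orange, red, maroon, and then sum the resulting inequalities over the $k\,|V(Z)|=\gamma(X)\,|V(Z)|$ pairs $(i,z)$. Call a cell \emph{good} if it is coloured blue, green, yellow, orange, red, or maroon; thus a cell is good precisely when it is neither pink nor white. Two elementary remarks about the colouring will be used repeatedly: a cell $\pi_i^{y,z}$ with $D\cap\pi_i^{y,z}\neq\emptyset$ is automatically good (it is blue, green, yellow, or orange), and conversely every red, pink, maroon, or white cell has $D\cap\pi_i^{y,z}=\emptyset$.

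The heart of the argument is the claim that, for fixed $i$ and $z$, the set $A_{i,z}:=\{y\in V(Y):\pi_i^{y,z}\text{ is good}\}$ is a dominating set of $Y$. To prove it, fix $y\in V(Y)$. If $\pi_i^{y,z}$ is good then $y\in A_{i,z}$ and $y$ dominates itself. Otherwise $\pi_i^{y,z}$ is pink or white; in either case $D\cap\pi_i^{y,z}=\emptyset$, the cell is not red, and unpacking the two definitions (for pink, "not red" together with "no vertex $Z$-dominated"; for white, "not maroon" together with "not red") shows that some vertex $(a,y,z)$ of the cell is $Y$-dominated, say by $(a,y',z)\in D$ with $y'=y$ or $y'\sim y$. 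Since $D\cap\pi_i^{y,z}=\emptyset$ and $a\in\pi_i$ we cannot have $y'=y$, so $y'\sim y$; and $(a,y',z)\in D\cap\pi_i^{y',z}$, so $\pi_i^{y',z}$ is good. Hence $y'\in A_{i,z}$ and $y'$ dominates $y$, proving the claim.

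It follows that $|A_{i,z}|\geq\gamma(Y)$. Since the cells of $Y_{i,z}$ are in bijection with $V(Y)$ and the good ones correspond exactly to $A_{i,z}$, this reads
\[
b'_{i,z}+g'_{i,z}+y'_{i,z}+o'_{i,z}+r'_{i,z}+m'_{i,z}=|A_{i,z}|\geq\gamma(Y).
\]
Summing over all $i\in[k]$ and all $z\in V(Z)$, and using that every blue cell lies in exactly one fiber $Y_{i,z}$ so that $\sum_{i,z}c'_{i,z}=c'$ for each colour $c$, together with $\#\{(i,z)\}=k\,|V(Z)|=\gamma(X)\,|V(Z)|$, yields $b'+g'+y'+o'+r'+m'\geq\gamma(X)\gamma(Y)|V(Z)|$, as desired.

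The only non-routine point is the claim, and there the real content is simply translating the definitions of "pink" and "white" into the assertion that such a cell must contain a $Y$-dominated vertex; once that is done, the neighbouring cell that realizes this domination is forced to meet $D$ and hence to be good. (Note that this lemma does not actually use the properties $u_i\in\pi_i$ or $\pi_i\subseteq N[u_i]$ of the partition — only that the cells partition $V(X\square Y\square Z)$.)
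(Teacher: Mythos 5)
Your proposal is correct and follows essentially the same route as the paper: in each fiber $Y_{i,z}$ you project the cells onto $V(Y)$ and show that the blue/green/yellow/orange/red/maroon (i.e.\ non-pink, non-white) cells project onto a dominating set of $Y$, then sum over the $\gamma(X)|V(Z)|$ fibers. The only difference is that you spell out in detail the step the paper leaves as ``it would be clear'' --- namely that a pink or white cell contains a $Y$-dominated vertex whose dominator lies in a neighbouring cell meeting $D$ --- and your unpacking of the colour definitions there is accurate.
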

\begin{proof}
In the projection $P_Y(Y_{i,z})$ of $Y_{i,z}$ into $Y,$ let each vertex of $Y$ take the color of the projected cell. It would be clear then that any pink or white vertex is dominated by one of the  blue, green, yellow or orange. As such, for any $(i,z)\in [k]\times V(Z),$ we have 
\[
b'_{i,z}+g'_{i,z}+y'_{i,z}+o'_{i,z}+r'_{i,z}+m'_{i,z}\geq \gamma (Y).
\]
Therefore,
\[
b'+g'+y'+o'+r'+m'=\sum_{i=1}^{\gamma (X)}\sum_{z\in V(Z)}b'_{i,z}+g'_{i,z}+y'_{i,z}+o'_{i,z}+r'_{i,z}+m'_{i,z}\geq \gamma(X)\gamma (Y)|V(Z)|.
\]
\end{proof}

In a similar way, one can prove the following lemma. We omit its proof since it will bot be used lately.

\begin{lem}\label{Main lemma 2} $b'+g'+y'+o'+r'+p'\geq \gamma (X) \gamma (Z) |V(Y)|.$
\end{lem}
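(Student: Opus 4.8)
The plan is to carry out the argument of Lemma~\ref{Main lemma 1} essentially verbatim, but with the roles of $Y$ and $Z$ interchanged. So instead of the $Y$-fibers $Y_{i,z}$ I would work with the $Z$-fibers $Z_{i,y}$: fix a pair $(i,y)\in[k]\times V(Y)$, consider the cells $\pi_i^{y,z}$ with $z$ ranging over $V(Z)$, project this fiber onto $Z$ via $P_Z$, and give each $z\in V(Z)$ the color of the cell $\pi_i^{y,z}$ from which it comes.

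Next I would show that the vertices colored blue, green, yellow, orange, red, or pink form a dominating set of $Z$. The key point is symmetric to the one used for Lemma~\ref{Main lemma 1}: if a cell $\pi_i^{y,z}$ is maroon or white, then by construction it contains a $Z$-dominated vertex $(a,y,z)$, i.e.\ a vertex dominated by some $(a,y,z')\in D$. Since $a\in\pi_i$, this $D$-vertex lies in the cell $\pi_i^{y,z'}$, so $D\cap\pi_i^{y,z'}\neq\emptyset$ and $\pi_i^{y,z'}$ is blue, green, yellow, or orange; moreover $z'\neq z$, since otherwise $(a,y,z)\in D\cap\pi_i^{y,z}$, which is impossible for a maroon or white cell. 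Hence $z$ and $z'$ are adjacent in $Z$, so after projection every maroon or white vertex is dominated by a blue/green/yellow/orange vertex, while the remaining six colors dominate themselves. Thus, for every $(i,y)\in[k]\times V(Y)$,
\[
b'_{i,y}+g'_{i,y}+y'_{i,y}+o'_{i,y}+r'_{i,y}+p'_{i,y}\geq\gamma (Z).
\]

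Finally I would sum this inequality over all $(i,y)\in[k]\times V(Y)$. Since $k=\gamma (X)$, there are $|V(Y)|$ choices of $y$, and $b'=\sum_{i,y}b'_{i,y}$ with the analogous identities for the other colors, this yields $b'+g'+y'+o'+r'+p'\geq \gamma (X)\gamma (Z)|V(Y)|$. I do not expect a genuine obstacle here; the only subtleties — both already present in the proof of Lemma~\ref{Main lemma 1} — are making sure the dominating vertex $z'$ produced in the projection is genuinely distinct from $z$ (so that the domination is non-trivial), and keeping track of the fact that the colors whose cells contain no $Z$-dominated vertex (blue, green, and also red and pink) must be counted inside the dominating set rather than being dominated by something else.
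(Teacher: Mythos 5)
Your proof is correct and is exactly the symmetric analogue of the proof of Lemma~\ref{Main lemma 1} (with the roles of $Y$ and $Z$ interchanged and $Z$-fibers $Z_{i,y}$ in place of $Y$-fibers), which is precisely the argument the paper has in mind when it omits the proof as being ``similar.'' The key observation you supply — that a maroon or white cell must contain a $Z$-dominated vertex whose dominator lies in an adjacent cell of the same $Z$-fiber, which is therefore blue, green, yellow, or orange — is the right one and is verified correctly.
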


\begin{lem}\label{Main lemma 3} $b'+r'\leq b+g+y+o.$
\end{lem}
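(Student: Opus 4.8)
The plan is to establish the inequality one $X$-fibre at a time and then sum over the fibres. Write $k=\gamma(X)$. I would first record two bookkeeping facts. Every vertex of $D$ lies in a cell meeting $D$, hence in a blue, green, yellow or orange cell, and it is coloured with the colour of that cell; consequently, for each $(y,z)\in V(Y)\times V(Z)$ one has $b_{y,z}+g_{y,z}+y_{y,z}+o_{y,z}=|D\cap X_{y,z}|$, and summing over $(y,z)$ gives $b+g+y+o=|D|$. Since also $b'=\sum_{(y,z)}b'_{y,z}$ and $r'=\sum_{(y,z)}r'_{y,z}$, it suffices to prove, for each fixed $(y,z)$, that $b'_{y,z}+r'_{y,z}\le |D\cap X_{y,z}|$.

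To do this, I would fix $(y,z)$, set $P:=\{a\in V(X):(a,y,z)\in D\}$ (so that $|P|=|D\cap X_{y,z}|$), and show that
\[
D_{y,z}:=P\cup\{\,u_i : \pi_i^{y,z}\text{ is neither blue nor red}\,\}
\]
is a dominating set of $X$. Indeed, given $v\in V(X)$, let $j$ be the unique index with $v\in\pi_j$. If $\pi_j^{y,z}$ is blue or red, then by the colouring rules none of its vertices is $Y$- or $Z$-dominated, so $(v,y,z)$, being dominated by $D$, must be $X$-dominated; hence there is $(a',y,z)\in D$ with $a'\in N[v]$, and this $a'\in P\subseteq D_{y,z}$ dominates $v$ in $X$. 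If $\pi_j^{y,z}$ is neither blue nor red, then $u_j\in D_{y,z}$ and $v\in\pi_j\subseteq N[u_j]$, so $u_j$ dominates $v$.

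Then the minimality of $\{u_1,\dots,u_k\}$ does the rest: on one hand $|D_{y,z}|\ge \gamma(X)=k$, and on the other hand $|D_{y,z}|\le |P|+\bigl(k-b'_{y,z}-r'_{y,z}\bigr)$, because exactly $b'_{y,z}+r'_{y,z}$ of the $k$ cells of $X_{y,z}$ are blue or red (a cell has a single colour). Comparing these two bounds gives $b'_{y,z}+r'_{y,z}\le |P|=|D\cap X_{y,z}|$, and summing over all $(y,z)\in V(Y)\times V(Z)$ yields $b'+r'\le b+g+y+o$.

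I expect the only delicate part to be the first (bookkeeping) paragraph — carefully checking that the fibrewise quantities $b'_{y,z},r'_{y,z},b_{y,z},\dots$ really reassemble into $b',r',b,\dots$, and that a vertex of $D$ always sits in a blue, green, yellow or orange cell. Once this is in place, the construction of $D_{y,z}$ together with the minimality of $\{u_1,\dots,u_k\}$ makes the core step essentially immediate, with the red cells handled on exactly the same footing as the blue ones. This is the natural $X\square Y\square Z$ analogue of the Clark--Suen counting step, which is recovered by taking $Z=P_1$.
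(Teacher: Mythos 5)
Your proposal is correct and is essentially the paper's own argument: in each $X$-fiber one augments $D\cap X_{y,z}$ with the vertices $u_i$ of the cells that are neither blue nor red to get a dominating set of $X$, and then compares with $\gamma(X)$ and sums over the fibers. The only cosmetic difference is that you rearrange the inequality fiber-by-fiber, whereas the paper sums first and then subtracts the total cell count $\gamma(X)|V(Y)||V(Z)|$.
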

\begin{proof}
For any $(y,z)\in V(Y)\times V(Z),$ in $X_{y,z},$ the blue, green, yellow and orange vertices dominate all the vertices in the blue and red cells. As such if we add to them the vertices $(u_i,y,z)$ of each green, yellow, orange, pink, maroon and white, we dominate all the vertices in $X_{y,z}.$ This shows that for any $(y,z)\in V(Y)\times V(Z),$
\[
b_{y,z}+g_{y,z}+y_{y,z}+o_{y,z}+g'_{y,z}+y'_{y,z}+o'_{y,z}+p'_{y,z}+m'_{y,z}+w'_{y,z}\geq \gamma (X)
\]
Therefore, by summing over all $(y,z)\in V(Y)\times V(Z),$ we get
\begin{equation}\label{Eq1}
b+g+y+o+g'+y'+o'+p'+m'+w'\geq \gamma (X)|V(Y)||V(Z)|.
\end{equation}
But since the total number of all cells is $\gamma (X)|V(Y)||V(Z)|,$ we have
\begin{equation}\label{Eq2}
b'+g'+y'+o'+r'+p'+m'+w'= \gamma (X)|V(Y)||V(Z)|.
\end{equation}
Using Equations $(\ref{Eq1})$ and $(\ref{Eq2})$ we get:
\begin{equation*}
b+g+y+o+(\gamma (X)|V(Y)||V(Z)|-b'-r')\geq \gamma (X)|V(Y)||V(Z)|.
\end{equation*}
The result follows.
\end{proof}

The reader is invited to check the results of the previous lemmas for Example \ref{Main example}.

\begin{theoreme}(Clark and Suen, \cite{2}) For any pair of graphs $X$ and $Y$ we have 
\[
\gamma (X\square Y)\geq \frac{1}{2}\gamma (X)\gamma (Y).
\]
\end{theoreme}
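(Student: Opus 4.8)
\section*{Proof proposal}

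The plan is to obtain Clark and Suen's inequality as the two-graph shadow of the framework above, by running it with the third graph $Z$ taken to be a single vertex; then $X\square Y\square Z\cong X\square Y$, $|V(Z)|=1$, and no vertex is ever $Z$-dominated. First I would record how the eight colours degenerate in this situation: with no $Z$-dominated vertices, the colours yellow, orange, maroon and white become impossible, so every cell is blue, green, red or pink. In particular every vertex of a fixed minimum dominating set $D$ of $X\square Y$ lies in a blue or a green cell, which gives the bookkeeping identity $b+g=|D|=\gamma(X\square Y)$; moreover $b\ge b'$ and $g\ge g'$, since each blue (resp.\ green) cell contains at least one blue (resp.\ green) vertex.

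Next I would feed this into the lemmas. Lemma~\ref{Main lemma 1}, with $y'=o'=m'=0$ and $|V(Z)|=1$, becomes $b'+g'+r'\ge\gamma(X)\gamma(Y)$, and Lemma~\ref{Main lemma 3}, with $y=o=0$, becomes $b'+r'\le b+g$. One could instead avoid all mention of $Z$ and simply repeat the two projection arguments in the two-graph setting: projecting a column $\pi_i\times V(Y)$ onto $Y$ shows that the blue, green and red cells of that column form a dominating set of $Y$, whence $b'+g'+r'\ge\gamma(X)\gamma(Y)$; and dominating each $X$-fibre by the blue and green vertices it contains together with the centres $u_i$ of its green and pink cells gives $b+g+g'+p'\ge\gamma(X)|V(Y)|$, which combined with the cell count $b'+g'+r'+p'=\gamma(X)|V(Y)|$ yields $b+g\ge b'+r'$.

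Finally I would combine the three facts. From $b'+g'+r'\ge\gamma(X)\gamma(Y)$ we get $b'+r'\ge\gamma(X)\gamma(Y)-g'$, and together with $b'+r'\le b+g=|D|$ this gives $|D|\ge\gamma(X)\gamma(Y)-g'$. Since $g'\le g\le b+g=|D|$, we conclude $|D|\ge\gamma(X)\gamma(Y)-|D|$, that is, $2\gamma(X\square Y)\ge\gamma(X)\gamma(Y)$, as required. I do not expect a genuine obstacle here: all the substantive content is already packaged in Lemmas~\ref{Main lemma 1} and~\ref{Main lemma 3}, and what remains is the short verification that the auxiliary colours vanish when $Z$ is trivial, plus the elementary arithmetic above; the only step needing a moment's care is the estimate $g'\le|D|$ that closes the chain, which is precisely why the ``green'' bookkeeping $b+g=|D|$ and $g'\le g$ is worth recording at the outset.
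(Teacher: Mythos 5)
Your proposal is correct and follows essentially the same route as the paper: specialize to trivial $Z$ so that only blue, green, red and pink cells survive, apply Lemma~\ref{Main lemma 1} and Lemma~\ref{Main lemma 3} to get $b'+g'+r'\ge\gamma(X)\gamma(Y)$ and $b+g\ge b'+r'$, and close with $g'\le g\le b+g=\gamma(X\square Y)$. You merely make explicit the final estimate $g'\le|D|$ that the paper leaves implicit in ``the result follows from $\gamma(X\square Y)=b+g$.''
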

\begin{proof}
In the case where $Z$ is the trivial graph with only one vertex, there will be no yellow, orange, maroon and white cells. As such, by Lemma \ref{Main lemma 1} and Lemma \ref{Main lemma 3}, we have:
\[ b'+g'+r'\geq \gamma (X) \gamma (Y) \]
and 
\[ b+g\geq b'+r'. \]
Combining these two inequalities, we get:
\[b+g+g'\geq \gamma (X) \gamma (Y).\]
The result follows from the fact that $\gamma(X\square Y)=b+g.$
\end{proof}

\begin{theoreme}\label{Main Theorem} For any pair of graphs $X$ and $Y$ we have 
\[
\gamma (X\square Y\square P_2)\geq \frac{2}{3}\gamma (X)\gamma (Y).
\]
\end{theoreme}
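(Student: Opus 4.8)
The plan is to specialize the cell‑coloring machinery to $Z=P_2$ (so $|V(Z)|=2$) and to combine the three lemmas already proved with one extra elementary counting estimate. Merely combining $\gamma(G\square P_2)\ge\gamma(G)$ with Clark–Suen only yields the coefficient $\tfrac12$, so the whole point is to extract more from the colored cells in the three–factor product.

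\emph{Step 1: reduce to counting $D$-vertices.} Fix a minimum dominating set $D$ of $X\square Y\square P_2$, so $\gamma(X\square Y\square P_2)=|D|$. Since the cells partition $V(X\square Y\square P_2)$ and every vertex of $D$ lies in a cell meeting $D$ — i.e.\ a blue, green, yellow or orange cell — the coloring of the vertices of $D$ gives $b+g+y+o=|D|$. Thus it suffices to show $b+g+y+o\ge\tfrac23\gamma(X)\gamma(Y)$.

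\emph{Step 2: cheap upper bounds on the "prime" counts.} Each green (resp.\ yellow, orange) cell contains at least one vertex of $D$, and these vertices receive the color of their cell, so $g'\le g$, $y'\le y$, $o'\le o$, whence $g'+y'+o'\le g+y+o\le|D|$. The new ingredient is the bound $m'\le|D|$, and this is exactly where the hypothesis $Z=P_2$ enters: a maroon cell $\pi_i^{y,z}$ contains no vertex of $D$ but has a $Z$-dominated vertex $(a,y,z)$ with $a\in\pi_i$, so $(a,y,z'')\in D$ where $z''$ is the unique neighbour of $z$ in $P_2$. Sending each maroon cell to such a vertex of $D$ is injective, because from the image $(a,y,z'')$ one recovers $y$, the index $i$ (as $a\in\pi_i$ and the $\pi_j$ partition $V(X)$), and $z$ (the other vertex of $P_2$). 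Hence $m'\le|D|$, and therefore $g'+y'+o'+m'\le 2|D|$.

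\emph{Step 3: run the lemmas.} Lemma \ref{Main lemma 1} with $|V(Z)|=2$ gives $b'+g'+y'+o'+r'+m'\ge 2\gamma(X)\gamma(Y)$, and Lemma \ref{Main lemma 3} gives $b'+r'\le b+g+y+o=|D|$. Subtracting, $g'+y'+o'+m'\ge 2\gamma(X)\gamma(Y)-|D|$. Combining with Step 2 yields $2|D|\ge 2\gamma(X)\gamma(Y)-|D|$, i.e.\ $3|D|\ge 2\gamma(X)\gamma(Y)$, which is the assertion $\gamma(X\square Y\square P_2)\ge\tfrac23\gamma(X)\gamma(Y)$.

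The only delicate point is the maroon estimate $m'\le|D|$: one must check that a maroon cell genuinely forces a vertex of $D$ in its sibling cell of the same $Z$-fiber and that this assignment is injective. Both facts use that a vertex of $P_2$ has a single neighbour, which is precisely why this sharper argument is special to $P_2$; for general $P_n$ (Theorem \ref{Main Theorem 2}) an interior vertex has two neighbours and one only gets the weaker constant $c_n$. Everything else is bookkeeping with already‑established inequalities.
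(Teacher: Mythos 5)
Your proof is correct and follows essentially the same route as the paper: Lemma \ref{Main lemma 1} plus Lemma \ref{Main lemma 3}, combined with a bound on the maroon cells that exploits the fact that each vertex of $P_2$ has a unique neighbour. The only cosmetic difference is that you bound $m'\le |D|$ by injecting maroon cells directly into $D$, whereas the paper injects each maroon cell into its complementary cell in the other $P_2$-layer to obtain the slightly sharper, but equally sufficient, bound $m'\le b'+g'$.
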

\begin{proof}
In the case where $Z$ is the path graph $P_2,$ by Lemma \ref{Main lemma 1} and Lemma \ref{Main lemma 3}, we have:
\begin{equation}\label{Eq3}
b+g+y+o+g'+y'+o'+m'\geq 2\gamma (X) \gamma (Y) .
\end{equation}
Let us call $\pi_i^{y,z_2}$ (resp. $\pi_i^{y,z_1}$) the complement of the cell $\pi_i^{y,z_1}$ (resp. $\pi_i^{y,z_2}$). Now remark that if a cell is colored maroon then its complement should be colored blue or green. This is because a maroon cell can't be only $X$-dominated, as such the intersection of $D$ and its complement is not empty. This implies that the complement of a maroon cell is either blue, green, yellow or orange. But it can't be yellow or orange since yellow and orange cells have $Z$-dominated vertices. In a similar way, the complement of a white cell is blue or green. Therefore, we have:
\begin{equation}
 b'+g'= m'+w'.
\end{equation}
In particular,
\begin{equation}\label{Eq4}
 b'+g'\geq m'.
\end{equation}
Combining (\ref{Eq3}) and (\ref{Eq4}), we get:
\[ b+g+y+o+g'+y'+o'+b'+g' \geq 2\gamma (X) \gamma (Y).\]
The result follows using the fact that $\gamma (X\square Y\square P_2)=|D|=b+g+y+o\geq b'+g'+y'+o'.$
\end{proof}

In what remains, we fix $Z$ to be the path graph $P_n$ on the $n$ vertices $\lbrace z_1,\cdots , z_n\rbrace.$ We state the following observations for the cells in any $Z$-fiber $Z_{i,y}$ in $X\square Y\square P_n,$
\begin{enumerate}
\item[$O_1:$] For any $1\leq l\leq n-2,$ the three consecutive cells $\pi_{i}^{y,z_l},$ $\pi_{i}^{y,z_{l+1}}$ and $\pi_{i}^{y,z_{l+2}}$ can't be all colored maroon.
\item[$O_2:$] The first two consecutive cells $\pi_{i}^{y,z_1}$ and $\pi_{i}^{y,z_{2}}$ can't be both maroon. Same goes for the last two consecutive cells $\pi_{i}^{y,z_{n-1}}$ and $\pi_{i}^{y,z_{n}}$
\item[$O_3:$] For any $2\leq l\leq n-2,$ if the two consecutive cells $\pi_{i}^{y,z_l}$ and $\pi_{i}^{y,z_{l+1}}$ are colored maroon then each of the cells $\pi_{i}^{y,z_{l-1}}$ and $\pi_{i}^{y,z_{l+2}}$ is blue, green, yellow or orange.
\item[$O_4:$] If the cell $\pi_{i}^{y,z_l}$ is maroon, then at least one of the two cells $\pi_{i}^{y,z_{l-1}}$ and $\pi_{i}^{y,z_{l+1}}$ is blue, green, yellow or orange.
\item[$O_5:$] If the cell $\pi_{i}^{y,z_l}$ is blue or green, then the two cells $\pi_{i}^{y,z_{l-1}}$ and $\pi_{i}^{y,z_{l-1}}$ can only be maroon or white.
\item[$O_6:$] If the cell $\pi_{i}^{y,z_l}$ is colored yellow or orange, then at least one of the two cells $\pi_{i}^{y,z_{l-1}}$ and $\pi_{i}^{y,z_{l-1}}$ should be yellow or orange and none of them can be colored red or pink.
\end{enumerate}

\begin{lem}\label{Main lem} For any non-negative integer $k,$ we have
 $$m'\leq 2\gamma (X\square Y\square P_n),~~~~  \text{if $n=3k$, with $k\geq 1,$}$$
 $$m'\leq \frac{2k}{k+1}\gamma (X\square Y\square P_n),~~~~  \text{if $n=3k+1$,}$$
 $$m'\leq \frac{2k+1}{k+1}\gamma (X\square Y\square P_n),~~~~  \text{if $n=3k+2$,}$$
\end{lem}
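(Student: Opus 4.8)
The plan is to argue inside each $Z$-fiber and then sum. Fix $i\in[k]$ and $y\in V(Y)$ and look at the path-like fiber $Z_{i,y}$ with cells $\pi_i^{y,z_1},\dots,\pi_i^{y,z_n}$; write $M_{i,y}$ for its number of maroon cells and $D_{i,y}$ for the number of vertices of $D$ lying in it (so $\sum_{i,y}D_{i,y}=|D|=\gamma(X\square Y\square P_n)$). Since a maroon cell is by definition not red, it carries a $Z$-dominated vertex, so for each maroon cell $\pi_i^{y,z_l}$ there is a vertex of $D$ in $\pi_i^{y,z_{l-1}}$ or in $\pi_i^{y,z_{l+1}}$; fix one such witness. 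A vertex of $D$ can be the witness of at most two maroon cells — the two cells directly below and above it in its own $Z$-fiber — so $M_{i,y}\le 2D_{i,y}$, and summing over fibers gives $m'\le 2|D|=2\gamma(X\square Y\square P_n)$. This already settles the case $n=3k$.

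For $n=3k+1$ and $n=3k+2$ one must beat the factor $2$, and this is where $O_1$--$O_6$ enter. If a vertex of $D$ in a cell $C$ of $Z_{i,y}$ witnesses two maroon cells, then both $Z$-neighbours of $C$ are maroon, so by $O_5$ and $O_6$ the cell $C$ is blue or green; moreover two such ``doubly-witnessing'' cells of $Z_{i,y}$ must lie at path-distance $\ge 3$, since at distance $1$ they would violate $O_5$ and at distance $2$ the common intermediate cell would be a single maroon cell compelled to choose two distinct witnesses. Hence the doubly-witnessing cells of $Z_{i,y}$ have pairwise disjoint closed path-neighbourhoods, so there are at most $\lfloor n/3\rfloor$ of them, and by $O_2$ neither end cell of $Z_{i,y}$ is doubly-witnessing. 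One also records the complementary estimate that, by $O_4$, the non-maroon cells of $Z_{i,y}$ form a dominating set of $P_n$, which gives $M_{i,y}\le n-\gamma(P_n)$.

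The \emph{main obstacle} is converting these fibrewise estimates into the global bound $m'\le\bigl(\tfrac{n}{\gamma(P_n)}-1\bigr)\gamma(X\square Y\square P_n)$: a single $Z$-fiber can genuinely carry a pattern such as $M,\ (\text{blue/green}),\ M,\ M,\ (\text{blue/green}),\ M,\ \dots$ padded out by red, pink or white cells, in which $M_{i,y}=2D_{i,y}$ irrespective of the residue of $n$, so no purely fibrewise count of $D$-cells can give the improvement. The gain has to be extracted from the padding: a red cell is not in $D$ and has no $Z$-dominated vertex, so its vertices are dominated transversally, forcing extra vertices of $D$ in the $X$- or $Y$-fibres through it, and similarly for pink and white cells; one then sets up a weighting of $D$ that charges each maroon cell to its witness while crediting the transversally-forced vertices to the ``wasteful'' fibres. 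The residue $n\bmod 3$ appears precisely because in a path of length $3k+1$ or $3k+2$ the length-$3$ blocks $M,\ (\text{blue/green}),\ M$ cannot be laid flush to both ends, so each wasteful fibre must either contain a $D$-cell witnessing at most one maroon cell or spend a padding cell that is paid for elsewhere. Summing the resulting inequality over the $k\,|V(Y)|$ fibres, and using that every blue, green, yellow or orange cell contains at least one vertex of $D$, yields the three bounds; I would finish by checking the extremal family $X=\overline{K_t}$, $Y=K_1$, $Z=P_n$ (so that $X\square Y\square P_n$ is a disjoint union of $t$ copies of $P_n$) to confirm that $\tfrac{2k}{k+1}$ and $\tfrac{2k+1}{k+1}$ are sharp and that no residue has been overlooked.
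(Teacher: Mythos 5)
Your witness-counting argument for the first inequality is correct and is in substance the paper's own: every maroon cell has a $Z$-dominated vertex, hence a $Z$-neighbouring cell meeting $D$ at the same $X$- and $Y$-coordinates, and each vertex of $D$ can serve at most the two cells directly above and below it, so $m'\le 2|D|$ (the paper packages the same count per fiber as $2(b'_{i,y}+g'_{i,y})+y'_{i,y}+o'_{i,y}\ge m'_{i,y}$, using $O_6$ to see that a yellow or orange cell has at most one maroon neighbour, and then sums). One structural claim you make along the way is, however, false: two doubly-witnessing cells need not lie at path-distance at least $3$. The pattern maroon, blue/green, maroon, blue/green, maroon on $P_5$ is consistent with $O_1$--$O_6$, and both blue/green cells are doubly-witnessing at distance $2$; a maroon cell is not ``compelled to choose two distinct witnesses'', it merely needs one. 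Consequently your bound of $\lfloor n/3\rfloor$ on the number of doubly-witnessing cells does not hold.

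More seriously, for $n=3k+1$ and $n=3k+2$ you do not have a proof. Everything after ``the main obstacle is'' is a plan rather than an argument: the weighting of $D$ is never defined, no inequality is derived from it, and the constants $\frac{2k}{k+1}$ and $\frac{2k+1}{k+1}$ are never extracted from anything. The obstacle itself is genuine --- a fiber of length $7$ coloured maroon, blue/green, maroon, maroon, blue/green, maroon, pink satisfies $O_1$--$O_6$ and carries four maroon cells against two $D$-carrying cells, so no estimate reading only the colours of a single $Z$-fiber can beat the ratio $2$ --- but identifying the obstacle is not overcoming it, and the proposed remedy (crediting transversally forced vertices of $D$ to the ``wasteful'' fibres) is exactly the kind of global double-counting that must be set up and checked explicitly, since those transversal vertices are already counted once through their own fibres. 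You should also be aware that the paper's proof of these two cases is itself very terse at this exact point: it asserts that the maximal number $2k$ (resp.\ $2k+1$) of maroon cells is reached ``in the situation where'' the fiber has $k+1$ blue or green cells, which is precisely the assertion your padded configuration puts in question. So you have located the delicate step of the lemma, but as submitted the second and third inequalities remain unproved.
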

\begin{proof}
When $Z=P_n,$ let us compare the number of maroon cells with the number of blue, green, yellow and orange cells in the $Z$-fiber $Z_{i,y}$ for any $(i,y)\in [k]\times V(Y).$ By the observations listed above, when $n=3k$ for $k\in \mathbb{N},$ the maximum number of maroon cells in $Z_{i,y}$ will be $2k$ which is  reached in the situation represented graphically in Figure \ref{fig:max maroon}.

\definecolor{ududff}{rgb}{0.30196078431372547,0.30196078431372547,1}
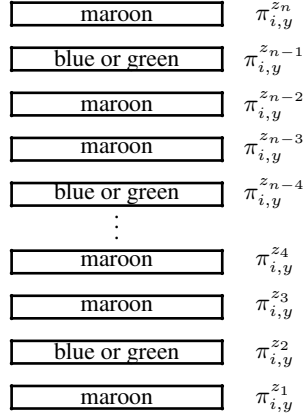
\begin{figure}[htbp]
\begin{center}
\begin{tikzpicture}[line cap=round,line join=round,>=triangle 45,x=.7cm,y=.3cm]
\draw [line width=1pt] (-11,10)-- (-7,10);
\draw [line width=1pt] (-7,10)-- (-7,9);
\draw [line width=1pt] (-7,9)-- (-11,9);
\draw [line width=1pt] (-11,9)-- (-11,10);
\draw [line width=1pt] (-11,8)-- (-7,8);
\draw [line width=1pt] (-7,8)-- (-7,7);
\draw [line width=1pt] (-7,7)-- (-11,7);
\draw [line width=1pt] (-11,7)-- (-11,8);
\draw [line width=1pt] (-11,6)-- (-7,6);
\draw [line width=1pt] (-7,6)-- (-7,5);
\draw [line width=1pt] (-7,5)-- (-11,5);
\draw [line width=1pt] (-11,5)-- (-11,6);
\draw [line width=1pt] (-11,4)-- (-7,4);
\draw [line width=1pt] (-7,4)-- (-7,3);
\draw [line width=1pt] (-7,3)-- (-11,3);
\draw [line width=1pt] (-11,3)-- (-11,4);
\draw [line width=1pt] (-11,2)-- (-7,2);
\draw [line width=1pt] (-7,2)-- (-7,1);
\draw [line width=1pt] (-7,1)-- (-11,1);
\draw [line width=1pt] (-11,1)-- (-11,2);
\draw [line width=1pt] (-11,-1)-- (-7,-1);
\draw [line width=1pt] (-7,-1)-- (-7,-2);
\draw [line width=1pt] (-7,-2)-- (-11,-2);
\draw [line width=1pt] (-11,-2)-- (-11,-1);
\draw [line width=1pt] (-11,-3)-- (-7,-3);
\draw [line width=1pt] (-7,-3)-- (-7,-4);
\draw [line width=1pt] (-7,-4)-- (-11,-4);
\draw [line width=1pt] (-11,-4)-- (-11,-3);
\draw [line width=1pt] (-11,-5)-- (-7,-5);
\draw [line width=1pt] (-7,-5)-- (-7,-6);
\draw [line width=1pt] (-7,-6)-- (-11,-6);
\draw [line width=1pt] (-11,-6)-- (-11,-5);
\draw [line width=1pt] (-11,-7)-- (-7,-7);
\draw [line width=1pt] (-7,-7)-- (-7,-8);
\draw [line width=1pt] (-7,-8)-- (-11,-8);
\draw [line width=1pt] (-11,-8)-- (-11,-7);
\begin{scriptsize}
\draw [fill=black] (-11,10) circle (0.5pt);
\draw [fill=black] (-7,10) circle (0.5pt);
\draw [fill=black] (-7,9) circle (0.5pt);
\draw [fill=black] (-11,9) circle (0.5pt);
\draw[color=black] (-9,9.430195257709743) node {maroon};
\draw[color=black] (-6,9.430195257709743) node {$\pi_{i,y}^{z_{n}}$};
\draw [fill=black] (-11,8) circle (0.5pt);
\draw [fill=black] (-7,8) circle (0.5pt);
\draw [fill=black] (-7,7) circle (0.5pt);
\draw [fill=black] (-11,7) circle (0.5pt);
\draw[color=black] (-9,7.438713298537591) node {blue or green};
\draw[color=black] (-6,7.438713298537591) node {$\pi_{i,y}^{z_{n-1}}$};
\draw [fill=black] (-11,6) circle (0.5pt);
\draw [fill=black] (-7,6) circle (0.5pt);
\draw [fill=black] (-7,5) circle (0.5pt);
\draw [fill=black] (-11,5) circle (0.5pt);
\draw[color=black] (-9,5.431171000985018) node {maroon};
\draw[color=black] (-6,5.431171000985018) node {$\pi_{i,y}^{z_{n-2}}$};
\draw [fill=black] (-11,4) circle (0.5pt);
\draw [fill=black] (-7,4) circle (0.5pt);
\draw [fill=black] (-7,3) circle (0.5pt);
\draw [fill=black] (-11,3) circle (0.5pt);
\draw[color=black] (-9,3.5521114104758107) node {maroon};
\draw[color=black] (-6,3.5521114104758107) node {$\pi_{i,y}^{z_{n-3}}$};
\draw [fill=black] (-11,2) circle (0.5pt);
\draw [fill=black] (-7,2) circle (0.5pt);
\draw [fill=black] (-7,1) circle (0.5pt);
\draw [fill=black] (-11,1) circle (0.5pt);
\draw[color=black] (-9,1.5445691129232377) node {blue or green};
\draw[color=black] (-6,1.5445691129232377) node {$\pi_{i,y}^{z_{n-4}}$};
\draw [fill=black] (-11,-1) circle (0.5pt);
\draw [fill=black] (-7,-1) circle (0.5pt);
\draw [fill=black] (-7,-2) circle (0.5pt);
\draw [fill=black] (-11,-2) circle (0.5pt);
\draw[color=black] (-9,-1.458714164215411) node {maroon};
\draw[color=black] (-6,-1.458714164215411) node {$\pi_{i,y}^{z_4}$};
\draw [fill=black] (-11,-3) circle (0.5pt);
\draw [fill=black] (-7,-3) circle (0.5pt);
\draw [fill=black] (-7,-4) circle (0.5pt);
\draw [fill=black] (-11,-4) circle (0.5pt);
\draw[color=black] (-9,-3.4180754466267214) node {maroon};
\draw[color=black] (-6,-3.4180754466267214) node {$\pi_{i,y}^{z_3}$};
\draw [fill=black] (-11,-5) circle (0.5pt);
\draw [fill=black] (-7,-5) circle (0.5pt);
\draw [fill=black] (-7,-6) circle (0.5pt);
\draw [fill=black] (-11,-6) circle (0.5pt);
\draw[color=black] (-9,-5.554100451222659) node {blue or green};
\draw[color=black] (-6,-5.554100451222659) node {$\pi_{i,y}^{z_2}$};
\draw [fill=black] (-11,-7) circle (0.5pt);
\draw [fill=black] (-7,-7) circle (0.5pt);
\draw [fill=black] (-7,-8) circle (0.5pt);
\draw [fill=black] (-11,-8) circle (0.5pt);
\draw[color=black] (-9,-7.545582410394811) node {maroon};
\draw[color=black] (-6,-7.545582410394811) node {$\pi_{i,y}^{z_1}$};
\draw[color=black] (-9,0.38822474953295605) node {$\vdots$};
\end{scriptsize}
\end{tikzpicture}
\caption{The situation when the maximum number of maroon cells is reached in a $Z$-fiber $Z_{i,y}$ in $X\square Y\square P_n.$}
\label{fig:max maroon}
\end{center}
\end{figure}

In this situation, the number of blue and green cells in $Z_{i,y}$ will be $k.$ As such, taking into account the observation $O_6,$ for any $(i,y)\in [k]\times V(Y),$ we have the following inequality:
\begin{equation}
2(b'_{i,y}+g'_{i,y})+y'_{i,y}+o'_{i,y}\geq m'_{i,y}
\end{equation} 
By summing over all pairs $(i,y)\in [k]\times V(Y),$ and using the fact that $\gamma (X\square Y\square P_n)=|D|=b+g+y+o\geq b'+g'+y'+o',$ we get:
$$2\gamma (X\square Y\square P_n)\geq m',~~~~  \text{if $n=3k$ with $k\in \mathbb{N}.$}$$
To obtain the remaining two inequalities, observe that when $n=3k+1,$ for an integer $k\geq 0,$ the maximum number of maroon cells in $Z_{i,y}$ will be $2k$ which is reached in the situation where the number of blue and green cells in $Z_{i,y}$ is $k+1.$ In a similar way, when $n=3k+2,$ for an integer $k\geq 0,$ the maximum number of maroon cells in $Z_{i,y}$ will be $2k+1$ which is reached in the situation where the number of blue and green cells in $Z_{i,y}$ is $k+1.$

\end{proof}

\begin{ex}\label{Example 2} Consider the three graphs $X, Y$ and $Z$ with vertex sets \newline $V(X)=\lbrace x_1,x_2,x_3,x_4,x_5,x_6,x_7,x_8,x_9\rbrace,$ $V(Y)=\lbrace y_1,y_2,y_3,y_4,y_5\rbrace,$ $V(Z)=\lbrace z_1,z_2,z_3\rbrace$ respectively and edge sets defined as follows 
$$E(X)=\lbrace x_1x_2,x_2x_3,x_2x_6,x_3x_4,x_4x_5,x_5x_6,x_5x_8,x_6x_7,x_7x_8,x_8x_9\rbrace,$$ 
$$E(Y)=\lbrace y_1y_2,y_1y_4,y_2y_3,y_3y_4,y_4y_5\rbrace,$$ 
and 
$$E(Z)=\lbrace z_1z_2,z_2z_3\rbrace.$$ 
It would be easy to check that the domination number of $X$ is $3,$ of $Y$ is $2$ and that of $Z$ is $1.$ Consider the dominating set $\lbrace x_2,x_5,x_8\rbrace$ of $X.$ Let $\lbrace \pi_{1}, \pi_{2},\pi_{3}\rbrace$ be the partition of $V(X)$ where:
$$\pi_{1}=\lbrace x_1,x_2,x_3,x_6\rbrace,$$
$$\pi_{2}=\lbrace x_4,x_5\rbrace$$
and
$$\pi_{3}=\lbrace x_7,x_8,x_9\rbrace.$$

We used SageMath \cite{7} to verify that the domination number of $X\square Y\square Z$ is $25$ and that a minimal dominating set is given by 
$$D=\lbrace (x_1, y_4, z_1),
 (x_2, y_2,z_1),
 (x_3, y_4,z_1),
 (x_4, y_4,z_1),
 (x_6, y_5,z_1),
 (x_8, y_1,z_1),
 (x_8, y_2,z_1),$$
 $$
 (x_8, y_3,z_1),
 (x_9, y_5,z_1),
 (x_1, y_4,z_2),
 (x_4, y_2,z_2),
 (x_4, y_5,z_2),
(x_6, y_1,z_2),
 (x_6, y_3,z_2),$$
 $$ 
 (x_7, y_4,z_2),
 (x_9, y_2,z_2),
 (x_1, y_2,z_3),
 (x_2, y_5,z_3),
 (x_3, y_1,z_3),
(x_3, y_3,z_3),$$
$$
 (x_5, y_4,z_3), 
 (x_7, y_2,z_3),
 (x_8, y_2,z_3),
 (x_8, y_5,z_3),
 (x_9, y_4,z_3)\rbrace.$$

In Figure \ref{fig:main fig}, we show the coloring of the cells with eight colors as well as the coloring of the vertices of $D$ with the four colors blue, green, yellow and orange. Similar to Example \ref{Main example}, we use dashed lines for white colored cells. The cells in the bottom correspond to the layer $(X\square Y)^{z_1},$ those in the middle correspond to layer $(X\square Y)^{z_2},$ while on the top figures the cells of the layer $(X\square Y)^{z_3}.$ The edges in the cartesian product $X\square Y\square Z$ are omitted to make the picture easily readable. In this example, the maximum number of maroon cells is reached in $Z_{2,y_2}.$ The reader is invited to check that this agrees with the graphic representation in Figure \ref{fig:max maroon}.
\definecolor{ffxfqq}{rgb}{1,0.4980392156862745,0}
\definecolor{ccwwqq}{rgb}{0.8,0.4,0}
\definecolor{ffzzcc}{rgb}{1,0.6,0.8}
\definecolor{cqcqcq}{rgb}{0.7529411764705882,0.7529411764705882,0.7529411764705882}
\definecolor{ffffww}{rgb}{1,1,0.4}
\definecolor{qqffqq}{rgb}{0,1,0}
\definecolor{qqqqff}{rgb}{0,0,1}
\definecolor{ffffff}{rgb}{1,1,1}
\definecolor{ffwwqq}{rgb}{1,0.4,0}
\begin{figure}[htbp]
\begin{center}
\begin{tikzpicture}[line cap=round,line join=round,>=triangle 45,x=.8cm,y=.85cm]
\draw [line width=1pt] (-14,0)-- (-13,0);
\draw [line width=1pt] (-13,0)-- (-12,0);
\draw [line width=1pt] (-12,0)-- (-11,0);
\draw [line width=1pt] (-11,0)-- (-10,0);
\draw [line width=1pt] (-10,0)-- (-9,0);
\draw [line width=1pt] (-9,0)-- (-8,0);
\draw [line width=1pt] (-8,0)-- (-7,0);
\draw [line width=1pt] (-15,1)-- (-15,2);
\draw [line width=1pt] (-15,2)-- (-15,3);
\draw [line width=1pt] (-15,3)-- (-15,4);
\draw [line width=1pt] (-14,9)-- (-13,9);
\draw [line width=1pt] (-13,9)-- (-12,9);
\draw [line width=1pt] (-12,9)-- (-11,9);
\draw [line width=1pt] (-11,9)-- (-10,9);
\draw [line width=1pt] (-10,9)-- (-9,9);
\draw [line width=1pt] (-9,9)-- (-8,9);
\draw [line width=1pt] (-8,9)-- (-7,9);
\draw [line width=1pt] (-15,10)-- (-15,11);
\draw [line width=1pt] (-15,11)-- (-15,12);
\draw [line width=1pt] (-15,12)-- (-15,13);
\draw [line width=1pt] (-2.994015505438793,1.997063168701213)-- (-3,12);
\draw [shift={(-8.5,9)},line width=1pt]  plot[domain=3.141592653589793:6.283185307179586,variable=\t]({1*1.5*cos(\t r)+0*1.5*sin(\t r)},{0*1.5*cos(\t r)+1*1.5*sin(\t r)});
\draw [shift={(-15,11.5)},line width=1pt]  plot[domain=1.5707963267948966:4.71238898038469,variable=\t]({1*1.5*cos(\t r)+0*1.5*sin(\t r)},{0*1.5*cos(\t r)+1*1.5*sin(\t r)});
\draw [shift={(-8.5,0)},line width=1pt]  plot[domain=3.141592653589793:6.283185307179586,variable=\t]({1*1.5*cos(\t r)+0*1.5*sin(\t r)},{0*1.5*cos(\t r)+1*1.5*sin(\t r)});
\draw [shift={(-15,2.5)},line width=1pt]  plot[domain=1.5707963267948966:4.71238898038469,variable=\t]({1*1.5*cos(\t r)+0*1.5*sin(\t r)},{0*1.5*cos(\t r)+1*1.5*sin(\t r)});
\draw [line width=1pt,dotted] (-14.2,2.8)-- (-8.8,2.8);
\draw [line width=1pt,dotted] (-8.8,2.8)-- (-8.8,3.2);
\draw [line width=1pt,dotted] (-8.8,3.2)-- (-14.2,3.2);
\draw [line width=1pt,dotted] (-14.2,3.2)-- (-14.2,2.8);
\draw [line width=1pt,color=qqqqff] (-11.1,4.1)-- (-9.9,4.1);
\draw [line width=1pt,color=qqqqff] (-9.9,4.1)-- (-9.9,3.9);
\draw [line width=1pt,color=qqqqff] (-9.9,3.9)-- (-11.1,3.9);
\draw [line width=1pt,color=qqqqff] (-11.1,3.9)-- (-11.1,4.1);
\draw [line width=1pt,dotted] (-5.8,10.2)-- (-5.8,9.8);
\draw [line width=1pt,dotted] (-5.8,9.8)-- (-8.2,9.8);
\draw [line width=1pt,dotted] (-8.2,9.8)-- (-8.2,10.2);
\draw [line width=1pt,color=qqqqff] (-11.1,14.1)-- (-9.9,14.1);
\draw [line width=1pt,color=qqqqff] (-9.9,13.9)-- (-11.1,13.9);
\draw [line width=1pt,color=qqqqff] (-11.1,13.9)-- (-11.1,14.1);
\draw [line width=1pt,color=ffzzcc] (-11.1,3.1)-- (-9.9,3.1);
\draw [line width=1pt,color=ffzzcc] (-9.9,3.1)-- (-9.9,2.9);
\draw [line width=1pt,color=ffzzcc] (-9.9,2.9)-- (-11.1,2.9);
\draw [line width=1pt,color=ffzzcc] (-11.1,2.9)-- (-11.1,3.1);
\draw [line width=1pt,color=ffwwqq] (-14.2,4.2)-- (-8.8,4.2);
\draw [line width=1pt,color=ffwwqq] (-8.8,4.2)-- (-8.8,3.8);
\draw [line width=1pt,color=ffwwqq] (-8.8,3.8)-- (-14.2,3.8);
\draw [line width=1pt,color=ffwwqq] (-14.2,3.8)-- (-14.2,4.2);
\draw [line width=1pt,color=qqqqff] (-14.2,2.2)-- (-8.8,2.2);
\draw [line width=1pt,color=qqqqff] (-8.8,2.2)-- (-8.8,1.8);
\draw [line width=1pt,color=qqqqff] (-8.8,1.8)-- (-14.2,1.8);
\draw [line width=1pt,color=qqqqff] (-14.2,1.8)-- (-14.2,2.2);
\draw [line width=1pt,color=ccwwqq] (-11.1,2.1)-- (-9.9,2.1);
\draw [line width=1pt,color=ccwwqq] (-9.9,1.9)-- (-11.1,1.9);
\draw [line width=1pt,color=ccwwqq] (-11.1,1.9)-- (-11.1,2.1);
\draw [line width=1pt,color=ffwwqq] (-8.2,2.2)-- (-5.8,2.2);
\draw [line width=1pt,color=ffwwqq] (-5.8,2.2)-- (-5.8,1.8);
\draw [line width=1pt,color=ffwwqq] (-8.2,1.8)-- (-8.2,2.2);
\draw [line width=1pt,color=ffwwqq] (-8.2,1.8)-- (-5.8,1.8);
\draw [line width=1pt,color=ffzzcc] (-11.1,1.1)-- (-9.9,1.1);
\draw [line width=1pt,color=ffzzcc] (-9.9,1.1)-- (-9.9,0.9);
\draw [line width=1pt,color=ffzzcc] (-9.9,0.9)-- (-11.1,0.9);
\draw [line width=1pt,color=ffzzcc] (-11.1,0.9)-- (-11.1,1.1);
\draw [line width=1pt,color=qqffqq] (-5.8,1.2)-- (-5.8,0.8);
\draw [line width=1pt,color=qqffqq] (-5.8,0.8)-- (-8.2,0.8);
\draw [line width=1pt,color=qqffqq] (-8.2,0.8)-- (-8.2,1.2);
\draw [line width=1pt,dotted] (-14.2,1.2)-- (-8.8,1.2);
\draw [line width=1pt,dotted] (-8.8,1.2)-- (-8.8,0.8);
\draw [line width=1pt,dotted] (-8.8,0.8)-- (-14.2,0.8);
\draw [line width=1pt,dotted] (-14.2,0.8)-- (-14.2,1.2);
\draw [line width=1pt] (-15,5)-- (-15,4);
\draw [line width=1pt] (-15,14)-- (-15,13);
\draw [line width=1pt] (-14,18)-- (-13,18);
\draw [line width=1pt] (-13,18)-- (-12,18);
\draw [line width=1pt] (-12,18)-- (-11,18);
\draw [line width=1pt] (-11,18)-- (-10,18);
\draw [line width=1pt] (-10,18)-- (-9,18);
\draw [line width=1pt] (-9,18)-- (-8,18);
\draw [line width=1pt] (-8,18)-- (-7,18);
\draw [shift={(-8.5,18)},line width=1pt]  plot[domain=3.141592653589793:6.283185307179586,variable=\t]({1*1.5*cos(\t r)+0*1.5*sin(\t r)},{0*1.5*cos(\t r)+1*1.5*sin(\t r)});
\draw [line width=1pt] (-15,19)-- (-15,20);
\draw [line width=1pt] (-15,20)-- (-15,21);
\draw [line width=1pt] (-15,21)-- (-15,22);
\draw [line width=1pt] (-15,22)-- (-15,23);
\draw [shift={(-15,20.5)},line width=1pt]  plot[domain=1.5707963267948966:4.71238898038469,variable=\t]({1*1.5*cos(\t r)+0*1.5*sin(\t r)},{0*1.5*cos(\t r)+1*1.5*sin(\t r)});
\draw [line width=1pt] (-3,21)-- (-3,12);
\draw [line width=1pt,dotted] (-5.8,13.8)-- (-5.8,14.2);
\draw [line width=1pt,dotted] (-5.8,14.2)-- (-8.2,14.2);
\draw [line width=1pt,dotted] (-8.2,14.2)-- (-8.2,13.8);
\draw [line width=1pt,color=qqffqq] (-8.2,23.2)-- (-5.8,23.2);
\draw [line width=1pt,color=qqffqq] (-5.8,23.2)-- (-5.8,22.8);
\draw [line width=1pt,color=qqffqq] (-8.2,22.8)-- (-8.2,23.2);
\draw [line width=1pt,color=ccwwqq] (-11.1,19.9)-- (-9.9,19.9);
\draw [line width=1pt,color=ccwwqq] (-9.9,20.1)-- (-11.1,20.1);
\draw [line width=1pt,color=ccwwqq] (-11.1,20.1)-- (-11.1,19.9);
\draw [line width=1pt,color=ffzzcc] (-8.2,20.8)-- (-5.8,20.8);
\draw [line width=1pt,color=ffzzcc] (-5.8,20.8)-- (-5.8,21.2);
\draw [line width=1pt,color=ffzzcc] (-5.8,21.2)-- (-8.2,21.2);
\draw [line width=1pt,color=ffzzcc] (-8.2,21.2)-- (-8.2,20.8);
\draw [line width=1pt,color=ffzzcc] (-8.2,18.8)-- (-5.8,18.8);
\draw [line width=1pt,color=ffzzcc] (-5.8,18.8)-- (-5.8,19.2);
\draw [line width=1pt,color=ffzzcc] (-8.2,19.2)-- (-8.2,18.8);
\draw [line width=1pt,color=qqffqq] (-14.2,20.2)-- (-8.8,20.2);
\draw [line width=1pt,color=qqffqq] (-8.8,20.2)-- (-8.8,19.8);
\draw [line width=1pt,color=qqffqq] (-8.8,19.8)-- (-14.2,19.8);
\draw [line width=1pt,color=qqffqq] (-14.2,19.8)-- (-14.2,20.2);
\draw [line width=1pt,color=ffxfqq] (-8.8,19.2)-- (-14.2,19.2);
\draw [line width=1pt,color=ffxfqq] (-14.2,19.2)-- (-14.2,18.8);
\draw [line width=1pt,color=ffxfqq] (-14.2,18.8)-- (-8.8,18.8);
\draw [line width=1pt,color=ffxfqq] (-8.8,18.8)-- (-8.8,19.2);
\draw [line width=1pt,color=ffzzcc] (-11.1,19.1)-- (-9.9,19.1);
\draw [line width=1pt,color=ffzzcc] (-9.9,19.1)-- (-9.9,18.9);
\draw [line width=1pt,color=ffzzcc] (-9.9,18.9)-- (-11.1,18.9);
\draw [line width=1pt,color=ffzzcc] (-11.1,18.9)-- (-11.1,19.1);
\draw [shift={(-11,0)},line width=1pt]  plot[domain=3.141592653589793:6.283185307179586,variable=\t]({1*2*cos(\t r)+0*2*sin(\t r)},{0*2*cos(\t r)+1*2*sin(\t r)});
\draw [shift={(-11,9)},line width=1pt]  plot[domain=3.141592653589793:6.283185307179586,variable=\t]({1*2*cos(\t r)+0*2*sin(\t r)},{0*2*cos(\t r)+1*2*sin(\t r)});
\draw [shift={(-11,18)},line width=1pt]  plot[domain=3.141592653589793:6.283185307179586,variable=\t]({1*2*cos(\t r)+0*2*sin(\t r)},{0*2*cos(\t r)+1*2*sin(\t r)});
\draw [line width=1pt,color=qqffqq] (-8.2,2.8)-- (-8.2,3.2);
\draw [line width=1pt,color=qqffqq] (-8.2,3.2)-- (-5.8,3.2);
\draw [line width=1pt,color=qqffqq] (-5.8,3.2)-- (-5.8,2.8);
\draw [line width=1pt,color=ffffww] (-8.2,12.8)-- (-8.2,13.2);
\draw [line width=1pt,color=ccwwqq] (-9.9,2.1)-- (-9.9,1.9);
\draw [line width=1pt,color=qqffqq] (-8.2,1.2)-- (-5.8,1.2);
\draw [line width=1pt,color=qqffqq] (-8.2,2.8)-- (-5.8,2.8);
\draw [line width=1pt,color=qqqqff] (-8.2,5.2)-- (-5.8,5.2);
\draw [line width=1pt,color=qqqqff] (-5.8,5.2)-- (-5.8,4.8);
\draw [line width=1pt,color=qqqqff] (-5.8,4.8)-- (-8.2,4.8);
\draw [line width=1pt,color=qqqqff] (-8.2,4.8)-- (-8.2,5.2);
\draw [line width=1pt,dotted] (-8.2,4.2)-- (-5.8,4.2);
\draw [line width=1pt,dotted] (-5.8,4.2)-- (-5.8,3.8);
\draw [line width=1pt,dotted] (-5.8,3.8)-- (-8.2,3.8);
\draw [line width=1pt,dotted] (-8.2,3.8)-- (-8.2,4.2);
\draw [line width=1pt,dotted] (-11.1,5.1)-- (-9.9,5.1);
\draw [line width=1pt,dotted] (-9.9,5.1)-- (-9.9,4.9);
\draw [line width=1pt,dotted] (-9.9,4.9)-- (-11.1,4.9);
\draw [line width=1pt,dotted] (-11.1,4.9)-- (-11.1,5.1);
\draw [line width=1pt,color=qqffqq] (-14.2,5.2)-- (-8.8,5.2);
\draw [line width=1pt,color=qqffqq] (-8.8,5.2)-- (-8.8,4.8);
\draw [line width=1pt,color=qqffqq] (-8.8,4.8)-- (-14.2,4.8);
\draw [line width=1pt,color=qqffqq] (-14.2,4.8)-- (-14.2,5.2);
\draw [line width=1pt,dotted] (-8.2,13.8)-- (-5.8,13.8);
\draw [line width=1pt,color=qqqqff] (-9.9,14.1)-- (-9.9,13.9);
\draw [line width=1pt,dotted] (-8.2,10.2)-- (-5.8,10.2);
\draw [line width=1pt,color=ffffww] (-8.2,13.2)-- (-5.8,13.2);
\draw [line width=1pt,color=ffffww] (-5.8,13.2)-- (-5.8,12.8);
\draw [line width=1pt,color=ffffww] (-5.8,12.8)-- (-8.2,12.8);
\draw [line width=1pt,dotted] (-14.2,14.2)-- (-8.8,14.2);
\draw [line width=1pt,dotted] (-8.8,14.2)-- (-8.8,13.8);
\draw [line width=1pt,dotted] (-8.8,13.8)-- (-14.2,13.8);
\draw [line width=1pt,dotted] (-14.2,13.8)-- (-14.2,14.2);
\draw [line width=1pt,color=ffwwqq] (-14.2,12.2)-- (-8.8,12.2);
\draw [line width=1pt,color=ffwwqq] (-8.8,12.2)-- (-8.8,11.8);
\draw [line width=1pt,color=ffwwqq] (-8.8,11.8)-- (-14.2,11.8);
\draw [line width=1pt,color=ffwwqq] (-14.2,11.8)-- (-14.2,12.2);
\draw [line width=1pt,dotted] (-14.2,11.2)-- (-8.8,11.2);
\draw [line width=1pt,dotted] (-8.8,11.2)-- (-8.8,10.8);
\draw [line width=1pt,dotted] (-8.8,10.8)-- (-14.2,10.8);
\draw [line width=1pt,dotted] (-14.2,10.8)-- (-14.2,11.2);
\draw [line width=1pt,color=qqffqq] (-14.2,10.2)-- (-8.8,10.2);
\draw [line width=1pt,color=qqffqq] (-8.8,10.2)-- (-8.8,9.8);
\draw [line width=1pt,color=qqffqq] (-8.8,9.8)-- (-14.2,9.8);
\draw [line width=1pt,color=qqffqq] (-14.2,9.8)-- (-14.2,10.2);
\draw [line width=1pt,color=ffzzcc] (-11.1,10.1)-- (-9.9,10.1);
\draw [line width=1pt,color=ffzzcc] (-9.9,10.1)-- (-9.9,9.9);
\draw [line width=1pt,color=ffzzcc] (-9.9,9.9)-- (-11.1,9.9);
\draw [line width=1pt,color=ffzzcc] (-11.1,9.9)-- (-11.1,10.1);
\draw [line width=1pt,color=ffwwqq] (-14.2,13.2)-- (-8.8,13.2);
\draw [line width=1pt,color=ffwwqq] (-8.8,13.2)-- (-8.8,12.8);
\draw [line width=1pt,color=ffwwqq] (-8.8,12.8)-- (-14.2,12.8);
\draw [line width=1pt,color=ffwwqq] (-14.2,12.8)-- (-14.2,13.2);
\draw [line width=1pt,dotted] (-8.2,12.2)-- (-5.8,12.2);
\draw [line width=1pt,dotted] (-5.8,12.2)-- (-5.8,11.8);
\draw [line width=1pt,dotted] (-5.8,11.8)-- (-8.2,11.8);
\draw [line width=1pt,dotted] (-8.2,11.8)-- (-8.2,12.2);
\draw [line width=1pt,dotted] (-11.1,13.1)-- (-9.9,13.1);
\draw [line width=1pt,dotted] (-9.9,13.1)-- (-9.9,12.9);
\draw [line width=1pt,dotted] (-9.9,12.9)-- (-11.1,12.9);
\draw [line width=1pt,dotted] (-11.1,12.9)-- (-11.1,13.1);
\draw [line width=1pt,color=ffzzcc] (-11.1,12.1)-- (-9.9,12.1);
\draw [line width=1pt,color=ffzzcc] (-9.9,12.1)-- (-9.9,11.9);
\draw [line width=1pt,color=ffzzcc] (-9.9,11.9)-- (-11.1,11.9);
\draw [line width=1pt,color=ffzzcc] (-11.1,11.9)-- (-11.1,12.1);
\draw [line width=1pt,color=ffffww] (-8.2,11.2)-- (-5.8,11.2);
\draw [line width=1pt,color=ffffww] (-5.8,11.2)-- (-5.8,10.8);
\draw [line width=1pt,color=ffffww] (-5.8,10.8)-- (-8.2,10.8);
\draw [line width=1pt,color=ffffww] (-8.2,10.8)-- (-8.2,11.2);
\draw [line width=1pt,color=qqqqff] (-11.1,11.1)-- (-9.9,11.1);
\draw [line width=1pt,color=qqqqff] (-9.9,11.1)-- (-9.9,10.9);
\draw [line width=1pt,color=qqqqff] (-9.9,10.9)-- (-11.1,10.9);
\draw [line width=1pt,color=qqqqff] (-11.1,10.9)-- (-11.1,11.1);
\draw [line width=1pt,dotted] (-14.2,22.2)-- (-8.8,22.2);
\draw [line width=1pt,dotted] (-8.8,22.2)-- (-8.8,21.8);
\draw [line width=1pt,dotted] (-8.8,21.8)-- (-14.2,21.8);
\draw [line width=1pt,dotted] (-14.2,21.8)-- (-14.2,22.2);
\draw [line width=1pt,color=qqqqff] (-8.8,23.2)-- (-14.2,23.2);
\draw [line width=1pt,color=qqqqff] (-14.2,23.2)-- (-14.2,22.8);
\draw [line width=1pt,color=qqqqff] (-14.2,22.8)-- (-8.8,22.8);
\draw [line width=1pt,color=qqqqff] (-8.8,22.8)-- (-8.8,23.2);
\draw [line width=1pt,dotted] (-11.1,23.1)-- (-9.9,23.1);
\draw [line width=1pt,dotted] (-9.9,23.1)-- (-9.9,22.9);
\draw [line width=1pt,dotted] (-9.9,22.9)-- (-11.1,22.9);
\draw [line width=1pt,dotted] (-11.1,22.9)-- (-11.1,23.1);
\draw [line width=1pt,color=ffwwqq] (-8.2,22.2)-- (-5.8,22.2);
\draw [line width=1pt,color=ffwwqq] (-5.8,22.2)-- (-5.8,21.8);
\draw [line width=1pt,color=ffwwqq] (-5.8,21.8)-- (-8.2,21.8);
\draw [line width=1pt,color=ffwwqq] (-8.2,21.8)-- (-8.2,22.2);
\draw [line width=1pt,color=ffffww] (-8.2,20.2)-- (-5.8,20.2);
\draw [line width=1pt,color=ffffww] (-5.8,20.2)-- (-5.8,19.8);
\draw [line width=1pt,color=ffffww] (-5.8,19.8)-- (-8.2,19.8);
\draw [line width=1pt,color=ffffww] (-8.2,19.8)-- (-8.2,20.2);
\draw [line width=1pt,color=ffzzcc] (-11.1,21.1)-- (-9.9,21.1);
\draw [line width=1pt,color=ffzzcc] (-9.9,21.1)-- (-9.9,20.9);
\draw [line width=1pt,color=ffzzcc] (-9.9,20.9)-- (-11.1,20.9);
\draw [line width=1pt,color=ffzzcc] (-11.1,20.9)-- (-11.1,21.1);
\draw [line width=1pt,color=ffxfqq] (-14.2,21.2)-- (-8.8,21.2);
\draw [line width=1pt,color=ffxfqq] (-8.8,21.2)-- (-8.8,20.8);
\draw [line width=1pt,color=ffxfqq] (-8.8,20.8)-- (-14.2,20.8);
\draw [line width=1pt,color=ffxfqq] (-14.2,20.8)-- (-14.2,21.2);
\draw [line width=1pt,color=qqqqff] (-11.1,22.1)-- (-9.9,22.1);
\draw [line width=1pt,color=qqqqff] (-9.9,22.1)-- (-9.9,21.9);
\draw [line width=1pt,color=qqqqff] (-9.9,21.9)-- (-11.1,21.9);
\draw [line width=1pt,color=qqqqff] (-11.1,21.9)-- (-11.1,22.1);
\draw [line width=1pt,color=ccwwqq] (-9.9,20.1)-- (-9.9,19.9);
\draw [line width=1pt,color=qqffqq] (-8.2,22.8)-- (-5.8,22.8);
\draw [line width=1pt,color=ffzzcc] (-8.2,19.2)-- (-5.8,19.2);
\draw [line width=1pt] (-7,18)-- (-6,18);
\draw [line width=1pt] (-7,9)-- (-6,9);
\draw [line width=1pt] (-7,0)-- (-6,0);
\begin{scriptsize}
\draw [fill=ffwwqq] (-14,4) circle (2.5pt);
\draw [fill=ffffff] (-13,4) circle (2.5pt);
\draw [fill=ffwwqq] (-12,4) circle (2.5pt);
\draw [fill=qqqqff] (-11,4) circle (2.5pt);
\draw [fill=ffffff] (-10,4) circle (2.5pt);
\draw [fill=ffffff] (-9,4) circle (2.5pt);
\draw [fill=ffffff] (-8,4) circle (2.5pt);
\draw [fill=ffffff] (-7,4) circle (2.5pt);
\draw [fill=qqffqq] (-7,3) circle (2.5pt);
\draw [fill=ffffff] (-8,3) circle (2.5pt);
\draw [fill=ffffff] (-9,3) circle (2.5pt);
\draw [fill=ffffff] (-10,3) circle (2.5pt);
\draw [fill=ffffff] (-11,3) circle (2.5pt);
\draw [fill=ffffff] (-12,3) circle (2.5pt);
\draw [fill=ffffff] (-13,3) circle (2.5pt);
\draw [fill=ffffff] (-14,3) circle (2.5pt);
\draw [fill=ffffff] (-14,2) circle (2.5pt);
\draw [fill=qqqqff] (-13,2) circle (2.5pt);
\draw [fill=ffffff] (-12,2) circle (2.5pt);
\draw [fill=ffffff] (-11,2) circle (2.5pt);
\draw [fill=ffffff] (-10,2) circle (2.5pt);
\draw [fill=ffffff] (-9,2) circle (2.5pt);
\draw [fill=ffffff] (-8,2) circle (2.5pt);
\draw [fill=ffwwqq] (-7,2) circle (2.5pt);
\draw [fill=qqffqq] (-7,1) circle (2.5pt);
\draw [fill=ffffff] (-8,1) circle (2.5pt);
\draw [fill=ffffff] (-9,1) circle (2.5pt);
\draw [fill=ffffff] (-10,1) circle (2.5pt);
\draw [fill=ffffff] (-11,1) circle (2.5pt);
\draw [fill=ffffff] (-12,1) circle (2.5pt);
\draw [fill=ffffff] (-13,1) circle (2.5pt);
\draw [fill=ffffff] (-14,1) circle (2.5pt);
\draw [fill=ffffff] (-14,10) circle (2.5pt);
\draw [fill=ffffff] (-13,10) circle (2.5pt);
\draw [fill=ffffff] (-12,10) circle (2.5pt);
\draw [fill=ffffff] (-11,10) circle (2.5pt);
\draw [fill=ffffff] (-10,10) circle (2.5pt);
\draw [fill=qqffqq] (-9,10) circle (2.5pt);
\draw [fill=ffffff] (-8,10) circle (2.5pt);
\draw [fill=ffffff] (-7,10) circle (2.5pt);
\draw [fill=ffffff] (-7,11) circle (2.5pt);
\draw [fill=ffffff] (-7,12) circle (2.5pt);
\draw [fill=ffffff] (-7,13) circle (2.5pt);
\draw [fill=ffffww] (-8,13) circle (2.5pt);
\draw [fill=ffffff] (-8,12) circle (2.5pt);
\draw [fill=ffffff] (-8,11) circle (2.5pt);
\draw [fill=ffffff] (-9,11) circle (2.5pt);
\draw [fill=ffwwqq] (-9,12) circle (2.5pt);
\draw [fill=ffffff] (-9,13) circle (2.5pt);
\draw [fill=ffffff] (-10,13) circle (2.5pt);
\draw [fill=ffffff] (-10,12) circle (2.5pt);
\draw [fill=ffffff] (-10,11) circle (2.5pt);
\draw [fill=qqqqff] (-11,11) circle (2.5pt);
\draw [fill=ffffff] (-11,12) circle (2.5pt);
\draw [fill=ffffff] (-11,13) circle (2.5pt);
\draw [fill=ffffff] (-12,13) circle (2.5pt);
\draw [fill=ffffff] (-12,12) circle (2.5pt);
\draw [fill=ffffff] (-12,11) circle (2.5pt);
\draw [fill=ffffff] (-13,11) circle (2.5pt);
\draw [fill=ffffff] (-13,12) circle (2.5pt);
\draw [fill=ffffff] (-13,13) circle (2.5pt);
\draw [fill=ffwwqq] (-14,13) circle (2.5pt);
\draw [fill=ffffff] (-14,12) circle (2.5pt);
\draw [fill=ffffff] (-14,11) circle (2.5pt);
\draw [fill=cqcqcq] (-14,0) circle (2.5pt);
\draw[color=cqcqcq] (-14,0.3) node {$x_1$};
\draw [fill=cqcqcq] (-13,0) circle (2.5pt);
\draw[color=cqcqcq] (-13,0.3) node {$x_2$};
\draw [fill=cqcqcq] (-12,0) circle (2.5pt);
\draw[color=cqcqcq] (-12,0.3) node {$x_3$};
\draw [fill=cqcqcq] (-11,0) circle (2.5pt);
\draw[color=cqcqcq] (-11,0.3) node {$x_4$};
\draw [fill=cqcqcq] (-10,0) circle (2.5pt);
\draw[color=cqcqcq] (-10,0.3) node {$x_5$};
\draw [fill=cqcqcq] (-9,0) circle (2.5pt);
\draw[color=cqcqcq] (-9,0.3) node {$x_6$};
\draw [fill=cqcqcq] (-8,0) circle (2.5pt);
\draw[color=cqcqcq] (-8,0.3) node {$x_7$};
\draw [fill=cqcqcq] (-7,0) circle (2.5pt);
\draw[color=cqcqcq] (-7,0.3) node {$x_8$};
\draw [fill=cqcqcq] (-15,1) circle (2.5pt);
\draw[color=cqcqcq] (-14.7,1) node {$y_1$};
\draw [fill=cqcqcq] (-15,2) circle (2.5pt);
\draw[color=cqcqcq] (-14.7,2) node {$y_2$};
\draw [fill=cqcqcq] (-15,3) circle (2.5pt);
\draw[color=cqcqcq] (-14.7,3) node {$y_3$};
\draw [fill=cqcqcq] (-15,4) circle (2.5pt);
\draw[color=cqcqcq] (-14.7,4) node {$y_4$};
\draw [fill=cqcqcq] (-14,9) circle (2.5pt);
\draw[color=cqcqcq] (-14,9.3) node {$x_1$};
\draw [fill=cqcqcq] (-13,9) circle (2.5pt);
\draw[color=cqcqcq] (-13,9.3) node {$x_2$};
\draw [fill=cqcqcq] (-12,9) circle (2.5pt);
\draw[color=cqcqcq] (-12,9.3) node {$x_3$};
\draw [fill=cqcqcq] (-11,9) circle (2.5pt);
\draw[color=cqcqcq] (-11,9.3) node {$x_4$};
\draw [fill=cqcqcq] (-10,9) circle (2.5pt);
\draw[color=cqcqcq] (-10,9.3) node {$x_5$};
\draw [fill=cqcqcq] (-9,9) circle (2.5pt);
\draw[color=cqcqcq] (-9,9.3) node {$x_6$};
\draw [fill=cqcqcq] (-8,9) circle (2.5pt);
\draw[color=cqcqcq] (-8,9.3) node {$x_7$};
\draw [fill=cqcqcq] (-7,9) circle (2.5pt);
\draw[color=cqcqcq] (-7,9.3) node {$x_8$};
\draw [fill=cqcqcq] (-15,10) circle (2.5pt);
\draw[color=cqcqcq] (-14.7,10) node {$y_1$};
\draw [fill=cqcqcq] (-15,11) circle (2.5pt);
\draw[color=cqcqcq] (-14.7,11) node {$y_2$};
\draw [fill=cqcqcq] (-15,12) circle (2.5pt);
\draw[color=cqcqcq] (-14.7,12) node {$y_3$};
\draw [fill=cqcqcq] (-15,13) circle (2.5pt);
\draw[color=cqcqcq] (-14.7,13) node {$y_4$};
\draw [fill=cqcqcq] (-2.994015505438793,1.997063168701213) circle (2.5pt);
\draw[color=cqcqcq] (-2.7,2) node {$z_1$};
\draw [fill=cqcqcq] (-3,12) circle (2.5pt);
\draw[color=cqcqcq] (-2.7,12) node {$z_2$};
\draw[color=black] (-2.5,11.139454729106477) node {$Z$};
\draw[color=black] (-15.986329869974245,10) node {$Y$};
\draw[color=black] (-16.037366696596532,1) node {$Y$};
\draw [fill=black] (-14.2,2.8) circle (0.5pt);
\draw [fill=black] (-8.8,2.8) circle (0.5pt);
\draw [fill=black] (-8.8,3.2) circle (0.5pt);
\draw [fill=black] (-14.2,3.2) circle (0.5pt);
\draw [fill=black] (-11.1,4.1) circle (0.5pt);
\draw [fill=black] (-9.9,4.1) circle (0.5pt);
\draw [fill=black] (-9.9,3.9) circle (0.5pt);
\draw [fill=black] (-11.1,3.9) circle (0.5pt);
\draw [fill=black] (-8.2,10.2) circle (0.5pt);
\draw [fill=black] (-5.8,10.2) circle (0.5pt);
\draw [fill=black] (-5.8,9.8) circle (0.5pt);
\draw [fill=black] (-8.2,9.8) circle (0.5pt);
\draw [fill=black] (-11.1,14.1) circle (0.5pt);
\draw [fill=black] (-9.9,14.1) circle (0.5pt);
\draw [fill=black] (-9.9,13.9) circle (0.5pt);
\draw [fill=black] (-11.1,13.9) circle (0.5pt);
\draw [fill=black] (-11.1,3.1) circle (0.5pt);
\draw [fill=black] (-9.9,3.1) circle (0.5pt);
\draw [fill=black] (-9.9,2.9) circle (0.5pt);
\draw [fill=black] (-11.1,2.9) circle (0.5pt);
\draw [fill=black] (-14.2,4.2) circle (0.5pt);
\draw [fill=black] (-8.8,4.2) circle (0.5pt);
\draw [fill=black] (-8.8,3.8) circle (0.5pt);
\draw [fill=black] (-14.2,3.8) circle (0.5pt);
\draw [fill=black] (-14.2,2.2) circle (0.5pt);
\draw [fill=black] (-8.8,2.2) circle (0.5pt);
\draw [fill=black] (-8.8,1.8) circle (0.5pt);
\draw [fill=black] (-14.2,1.8) circle (0.5pt);
\draw [fill=black] (-11.1,2.1) circle (0.5pt);
\draw [fill=black] (-9.9,2.1) circle (0.5pt);
\draw [fill=black] (-9.9,1.9) circle (0.5pt);
\draw [fill=black] (-11.1,1.9) circle (0.5pt);
\draw [fill=black] (-8.2,2.2) circle (0.5pt);
\draw [fill=black] (-5.8,2.2) circle (0.5pt);
\draw [fill=black] (-5.8,1.8) circle (0.5pt);
\draw [fill=black] (-8.2,1.8) circle (0.5pt);
\draw [fill=black] (-11.1,1.1) circle (0.5pt);
\draw [fill=black] (-9.9,1.1) circle (0.5pt);
\draw [fill=black] (-9.9,0.9) circle (0.5pt);
\draw [fill=black] (-11.1,0.9) circle (0.5pt);
\draw [fill=black] (-8.2,1.2) circle (0.5pt);
\draw [fill=black] (-5.8,1.2) circle (0.5pt);
\draw [fill=black] (-5.8,0.8) circle (0.5pt);
\draw [fill=black] (-8.2,0.8) circle (0.5pt);
\draw [fill=black] (-14.2,1.2) circle (0.5pt);
\draw [fill=black] (-8.8,1.2) circle (0.5pt);
\draw [fill=black] (-8.8,0.8) circle (0.5pt);
\draw [fill=black] (-14.2,0.8) circle (0.5pt);
\draw [fill=cqcqcq] (-15,5) circle (2.5pt);
\draw[color=cqcqcq] (-14.7,5) node {$y_5$};
\draw [fill=cqcqcq] (-15,14) circle (2.5pt);
\draw[color=cqcqcq] (-14.7,14) node {$y_5$};
\draw [fill=ffffff] (-14,14) circle (2.5pt);
\draw [fill=ffffff] (-13,14) circle (2.5pt);
\draw [fill=ffffff] (-12,14) circle (2.5pt);
\draw [fill=qqqqff] (-11,14) circle (2.5pt);
\draw [fill=ffffff] (-10,14) circle (2.5pt);
\draw [fill=ffffff] (-9,14) circle (2.5pt);
\draw [fill=ffffff] (-8,14) circle (2.5pt);
\draw [fill=ffffff] (-7,14) circle (2.5pt);
\draw [fill=ffffff] (-14,5) circle (2.5pt);
\draw [fill=ffffff] (-13,5) circle (2.5pt);
\draw [fill=ffffff] (-12,5) circle (2.5pt);
\draw [fill=ffffff] (-11,5) circle (2.5pt);
\draw [fill=ffffff] (-10,5) circle (2.5pt);
\draw [fill=qqffqq] (-9,5) circle (2.5pt);
\draw [fill=ffffff] (-8,5) circle (2.5pt);
\draw [fill=ffffff] (-7,5) circle (2.5pt);
\draw [fill=cqcqcq] (-14,18) circle (2.5pt);
\draw[color=cqcqcq] (-14,18.3) node {$x_1$};
\draw [fill=cqcqcq] (-13,18) circle (2.5pt);
\draw[color=cqcqcq] (-13,18.3) node {$x_2$};
\draw [fill=cqcqcq] (-12,18) circle (2.5pt);
\draw[color=cqcqcq] (-12,18.3) node {$x_3$};
\draw [fill=cqcqcq] (-11,18) circle (2.5pt);
\draw[color=cqcqcq] (-11,18.3) node {$x_4$};
\draw [fill=cqcqcq] (-10,18) circle (2.5pt);
\draw[color=cqcqcq] (-10,18.3) node {$x_5$};
\draw [fill=cqcqcq] (-9,18) circle (2.5pt);
\draw[color=cqcqcq] (-9,18.3) node {$x_6$};
\draw [fill=cqcqcq] (-8,18) circle (2.5pt);
\draw[color=cqcqcq] (-8,18.3) node {$x_7$};
\draw [fill=cqcqcq] (-7,18) circle (2.5pt);
\draw[color=cqcqcq] (-7,18.3) node {$x_8$};
\draw [fill=ffffff] (-14,19) circle (2.5pt);
\draw [fill=ffffff] (-13,19) circle (2.5pt);
\draw [fill=ffxfqq] (-12,19) circle (2.5pt);
\draw [fill=ffffff] (-11,19) circle (2.5pt);
\draw [fill=ffffff] (-10,19) circle (2.5pt);
\draw [fill=ffffff] (-9,19) circle (2.5pt);
\draw [fill=ffffff] (-8,19) circle (2.5pt);
\draw [fill=ffffff] (-7,19) circle (2.5pt);
\draw [fill=ffffww] (-7,20) circle (2.5pt);
\draw [fill=ffffff] (-7,21) circle (2.5pt);
\draw [fill=ffffff] (-7,22) circle (2.5pt);
\draw [fill=qqffqq] (-7,23) circle (2.5pt);
\draw [fill=ffffff] (-8,23) circle (2.5pt);
\draw [fill=ffffff] (-8,22) circle (2.5pt);
\draw [fill=ffffff] (-8,21) circle (2.5pt);
\draw [fill=ffffww] (-8,20) circle (2.5pt);
\draw [fill=ffffff] (-9,20) circle (2.5pt);
\draw [fill=ffffff] (-9,21) circle (2.5pt);
\draw [fill=ffffff] (-9,22) circle (2.5pt);
\draw [fill=ffffff] (-9,23) circle (2.5pt);
\draw [fill=ffffff] (-10,23) circle (2.5pt);
\draw [fill=qqqqff] (-10,22) circle (2.5pt);
\draw [fill=ffffff] (-10,21) circle (2.5pt);
\draw [fill=ffffff] (-10,20) circle (2.5pt);
\draw [fill=ffffff] (-11,20) circle (2.5pt);
\draw [fill=ffffff] (-11,21) circle (2.5pt);
\draw [fill=ffffff] (-11,22) circle (2.5pt);
\draw [fill=ffffff] (-11,23) circle (2.5pt);
\draw [fill=ffffff] (-12,23) circle (2.5pt);
\draw [fill=ffffff] (-12,22) circle (2.5pt);
\draw [fill=ffxfqq] (-12,21) circle (2.5pt);
\draw [fill=ffffff] (-12,20) circle (2.5pt);
\draw [fill=ffffff] (-13,20) circle (2.5pt);
\draw [fill=ffffff] (-13,21) circle (2.5pt);
\draw [fill=ffffff] (-13,22) circle (2.5pt);
\draw [fill=qqqqff] (-13,23) circle (2.5pt);
\draw [fill=ffffff] (-14,23) circle (2.5pt);
\draw [fill=ffffff] (-14,22) circle (2.5pt);
\draw [fill=ffffff] (-14,21) circle (2.5pt);
\draw [fill=cqcqcq] (-15,19) circle (2.5pt);
\draw[color=cqcqcq] (-14.7,19) node {$y_1$};
\draw [fill=cqcqcq] (-15,20) circle (2.5pt);
\draw[color=cqcqcq] (-14.7,20) node {$y_2$};
\draw [fill=cqcqcq] (-15,21) circle (2.5pt);
\draw[color=cqcqcq] (-14.7,21) node {$y_3$};
\draw [fill=cqcqcq] (-15,22) circle (2.5pt);
\draw[color=cqcqcq] (-14.7,22) node {$y_4$};
\draw [fill=cqcqcq] (-15,23) circle (2.5pt);
\draw[color=cqcqcq] (-14.7,23) node {$y_5$};
\draw[color=black] (-15.986329869974245,19.050162855561286) node {$Y$};
\draw [fill=cqcqcq] (-3,21) circle (2.5pt);
\draw[color=cqcqcq] (-2.7,21) node {$z_3$};
\draw [fill=qqffqq] (-14,20) circle (2.5pt);
\draw [fill=black] (-8.2,13.8) circle (0.5pt);
\draw [fill=black] (-5.8,13.8) circle (0.5pt);
\draw [fill=black] (-5.8,14.2) circle (0.5pt);
\draw [fill=black] (-8.2,14.2) circle (0.5pt);
\draw [fill=black] (-8.2,23.2) circle (0.5pt);
\draw [fill=black] (-5.8,23.2) circle (0.5pt);
\draw [fill=black] (-5.8,22.8) circle (0.5pt);
\draw [fill=black] (-8.2,22.8) circle (0.5pt);
\draw [fill=black] (-11.1,19.9) circle (0.5pt);
\draw [fill=black] (-9.9,19.9) circle (0.5pt);
\draw [fill=black] (-9.9,20.1) circle (0.5pt);
\draw [fill=black] (-11.1,20.1) circle (0.5pt);
\draw [fill=black] (-8.2,20.8) circle (0.5pt);
\draw [fill=black] (-5.8,20.8) circle (0.5pt);
\draw [fill=black] (-5.8,21.2) circle (0.5pt);
\draw [fill=black] (-8.2,21.2) circle (0.5pt);
\draw [fill=black] (-8.2,18.8) circle (0.5pt);
\draw [fill=black] (-5.8,18.8) circle (0.5pt);
\draw [fill=black] (-5.8,19.2) circle (0.5pt);
\draw [fill=black] (-8.2,19.2) circle (0.5pt);
\draw [fill=black] (-14.2,20.2) circle (0.5pt);
\draw [fill=black] (-8.8,20.2) circle (0.5pt);
\draw [fill=black] (-8.8,19.8) circle (0.5pt);
\draw [fill=black] (-14.2,19.8) circle (0.5pt);
\draw [fill=black] (-8.8,19.2) circle (0.5pt);
\draw [fill=black] (-14.2,19.2) circle (0.5pt);
\draw [fill=black] (-14.2,18.8) circle (0.5pt);
\draw [fill=black] (-8.8,18.8) circle (0.5pt);
\draw [fill=black] (-11.1,19.1) circle (0.5pt);
\draw [fill=black] (-9.9,19.1) circle (0.5pt);
\draw [fill=black] (-9.9,18.9) circle (0.5pt);
\draw [fill=black] (-11.1,18.9) circle (0.5pt);
\draw[color=black] (-12.668936139525536,-1.6197519264657936) node {$X$};
\draw[color=black] (-12.924120272636976,7.8730978252799755) node {$X$};
\draw[color=black] (-12.873083446014688,16.753505657558275) node {$X$};
\draw [fill=black] (-8.2,2.8) circle (0.5pt);
\draw [fill=black] (-8.2,3.2) circle (0.5pt);
\draw [fill=black] (-5.8,3.2) circle (0.5pt);
\draw [fill=black] (-5.8,2.8) circle (0.5pt);
\draw [fill=black] (-8.2,12.8) circle (0.5pt);
\draw [fill=black] (-8.2,13.2) circle (0.5pt);
\draw [fill=black] (-8.2,5.2) circle (0.5pt);
\draw [fill=black] (-5.8,5.2) circle (0.5pt);
\draw [fill=black] (-5.8,4.8) circle (0.5pt);
\draw [fill=black] (-8.2,4.8) circle (0.5pt);
\draw [fill=black] (-8.2,4.2) circle (0.5pt);
\draw [fill=black] (-5.8,4.2) circle (0.5pt);
\draw [fill=black] (-5.8,3.8) circle (0.5pt);
\draw [fill=black] (-8.2,3.8) circle (0.5pt);
\draw [fill=black] (-11.1,5.1) circle (0.5pt);
\draw [fill=black] (-9.9,5.1) circle (0.5pt);
\draw [fill=black] (-9.9,4.9) circle (0.5pt);
\draw [fill=black] (-11.1,4.9) circle (0.5pt);
\draw [fill=black] (-14.2,5.2) circle (0.5pt);
\draw [fill=black] (-8.8,5.2) circle (0.5pt);
\draw [fill=black] (-8.8,4.8) circle (0.5pt);
\draw [fill=black] (-14.2,4.8) circle (0.5pt);
\draw [fill=black] (-5.8,13.2) circle (0.5pt);
\draw [fill=black] (-5.8,12.8) circle (0.5pt);
\draw [fill=black] (-14.2,14.2) circle (0.5pt);
\draw [fill=black] (-8.8,14.2) circle (0.5pt);
\draw [fill=black] (-8.8,13.8) circle (0.5pt);
\draw [fill=black] (-14.2,13.8) circle (0.5pt);
\draw [fill=black] (-14.2,12.2) circle (0.5pt);
\draw [fill=black] (-8.8,12.2) circle (0.5pt);
\draw [fill=black] (-8.8,11.8) circle (0.5pt);
\draw [fill=black] (-14.2,11.8) circle (0.5pt);
\draw [fill=black] (-14.2,11.2) circle (0.5pt);
\draw [fill=black] (-8.8,11.2) circle (0.5pt);
\draw [fill=black] (-8.8,10.8) circle (0.5pt);
\draw [fill=black] (-14.2,10.8) circle (0.5pt);
\draw [fill=black] (-14.2,10.2) circle (0.5pt);
\draw [fill=black] (-8.8,10.2) circle (0.5pt);
\draw [fill=black] (-8.8,9.8) circle (0.5pt);
\draw [fill=black] (-14.2,9.8) circle (0.5pt);
\draw [fill=black] (-11.1,10.1) circle (0.5pt);
\draw [fill=black] (-9.9,10.1) circle (0.5pt);
\draw [fill=black] (-9.9,9.9) circle (0.5pt);
\draw [fill=black] (-11.1,9.9) circle (0.5pt);
\draw [fill=black] (-14.2,13.2) circle (0.5pt);
\draw [fill=black] (-8.8,13.2) circle (0.5pt);
\draw [fill=black] (-8.8,12.8) circle (0.5pt);
\draw [fill=black] (-14.2,12.8) circle (0.5pt);
\draw [fill=black] (-8.2,12.2) circle (0.5pt);
\draw [fill=black] (-5.8,12.2) circle (0.5pt);
\draw [fill=black] (-5.8,11.8) circle (0.5pt);
\draw [fill=black] (-8.2,11.8) circle (0.5pt);
\draw [fill=black] (-11.1,13.1) circle (0.5pt);
\draw [fill=black] (-9.9,13.1) circle (0.5pt);
\draw [fill=black] (-9.9,12.9) circle (0.5pt);
\draw [fill=black] (-11.1,12.9) circle (0.5pt);
\draw [fill=black] (-11.1,12.1) circle (0.5pt);
\draw [fill=black] (-9.9,12.1) circle (0.5pt);
\draw [fill=black] (-9.9,11.9) circle (0.5pt);
\draw [fill=black] (-11.1,11.9) circle (0.5pt);
\draw [fill=black] (-8.2,11.2) circle (0.5pt);
\draw [fill=black] (-5.8,11.2) circle (0.5pt);
\draw [fill=black] (-5.8,10.8) circle (0.5pt);
\draw [fill=black] (-8.2,10.8) circle (0.5pt);
\draw [fill=black] (-11.1,11.1) circle (0.5pt);
\draw [fill=black] (-9.9,11.1) circle (0.5pt);
\draw [fill=black] (-9.9,10.9) circle (0.5pt);
\draw [fill=black] (-11.1,10.9) circle (0.5pt);
\draw [fill=black] (-14.2,22.2) circle (0.5pt);
\draw [fill=black] (-8.8,22.2) circle (0.5pt);
\draw [fill=black] (-8.8,21.8) circle (0.5pt);
\draw [fill=black] (-14.2,21.8) circle (0.5pt);
\draw [fill=black] (-8.8,23.2) circle (0.5pt);
\draw [fill=black] (-14.2,23.2) circle (0.5pt);
\draw [fill=black] (-14.2,22.8) circle (0.5pt);
\draw [fill=black] (-8.8,22.8) circle (0.5pt);
\draw [fill=black] (-11.1,23.1) circle (0.5pt);
\draw [fill=black] (-9.9,23.1) circle (0.5pt);
\draw [fill=black] (-9.9,22.9) circle (0.5pt);
\draw [fill=black] (-11.1,22.9) circle (0.5pt);
\draw [fill=black] (-8.2,22.2) circle (0.5pt);
\draw [fill=black] (-5.8,22.2) circle (0.5pt);
\draw [fill=black] (-5.8,21.8) circle (0.5pt);
\draw [fill=black] (-8.2,21.8) circle (0.5pt);
\draw [fill=black] (-8.2,20.2) circle (0.5pt);
\draw [fill=black] (-5.8,20.2) circle (0.5pt);
\draw [fill=black] (-5.8,19.8) circle (0.5pt);
\draw [fill=black] (-8.2,19.8) circle (0.5pt);
\draw [fill=black] (-11.1,21.1) circle (0.5pt);
\draw [fill=black] (-9.9,21.1) circle (0.5pt);
\draw [fill=black] (-9.9,20.9) circle (0.5pt);
\draw [fill=black] (-11.1,20.9) circle (0.5pt);
\draw [fill=black] (-14.2,21.2) circle (0.5pt);
\draw [fill=black] (-8.8,21.2) circle (0.5pt);
\draw [fill=black] (-8.8,20.8) circle (0.5pt);
\draw [fill=black] (-14.2,20.8) circle (0.5pt);
\draw [fill=black] (-11.1,22.1) circle (0.5pt);
\draw [fill=black] (-9.9,22.1) circle (0.5pt);
\draw [fill=black] (-9.9,21.9) circle (0.5pt);
\draw [fill=black] (-11.1,21.9) circle (0.5pt);
\draw [fill=ffffff] (-6,23) circle (2.5pt);
\draw [fill=ffwwqq] (-6,22) circle (2.5pt);
\draw [fill=ffffff] (-6,21) circle (2.5pt);
\draw [fill=ffffff] (-6,20) circle (2.5pt);
\draw [fill=ffffff] (-6,19) circle (2.5pt);
\draw [fill=cqcqcq] (-6,18) circle (2.5pt);
\draw[color=cqcqcq] (-6,18.3) node {$x_9$};
\draw [fill=ffffff] (-6,14) circle (2.5pt);
\draw [fill=ffffff] (-6,13) circle (2.5pt);
\draw [fill=ffffff] (-6,12) circle (2.5pt);
\draw [fill=ffffww] (-6,11) circle (2.5pt);
\draw [fill=ffffff] (-6,10) circle (2.5pt);
\draw [fill=cqcqcq] (-6,9) circle (2.5pt);
\draw[color=cqcqcq] (-6,9.3) node {$x_{9}$};
\draw [fill=qqqqff] (-6,5) circle (2.5pt);
\draw [fill=ffffff] (-6,4) circle (2.5pt);
\draw [fill=ffffff] (-6,3) circle (2.5pt);
\draw [fill=ffffff] (-6,2) circle (2.5pt);
\draw [fill=ffffff] (-6,1) circle (2.5pt);
\draw [fill=cqcqcq] (-6,0) circle (2.5pt);
\draw[color=cqcqcq] (-6,0.3) node {$x_{9}$};
\end{scriptsize}
\end{tikzpicture}
\caption{Cell coloring for the cartesian product $X\square Y\square Z.$}
\label{fig:main fig}
\end{center}
\end{figure}
\end{ex}


\begin{theoreme}\label{Main Theorem 2} For any pair of graphs $X$ and $Y$ and for any non-negative integer $k,$ we have 
$$\gamma (X\square Y\square P_n)\geq \frac{3}{4}\gamma(P_n)\gamma(X)\gamma(Y),~~~~  \text{if $n=3k$, with $k\geq 1,$}$$
 $$\gamma (X\square Y\square P_n)\geq \frac{3k+1}{4k+2}\gamma(P_n)\gamma(X)\gamma(Y),~~~~  \text{if $n=3k+1$,}$$
 $$\gamma (X\square Y\square P_n)\geq \frac{3k+2}{4k+3}\gamma(P_n)\gamma(X)\gamma(Y),~~~~  \text{if $n=3k+2$.}$$
\end{theoreme}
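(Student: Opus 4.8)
The plan is to specialize the construction of the preceding pages to $Z=P_n$ and to combine the three inequalities already established: Lemma~\ref{Main lemma 1}, Lemma~\ref{Main lemma 3}, and Lemma~\ref{Main lem}. Throughout, write $\Gamma:=\gamma(X\square Y\square P_n)=|D|$, and recall that $\gamma(P_n)=\lceil n/3\rceil$, so $\gamma(P_n)=k$ if $n=3k$ and $\gamma(P_n)=k+1$ if $n=3k+1$ or $n=3k+2$. All three asserted inequalities should come out of one chain of estimates; the only difference among the cases is the coefficient $c_n$ furnished by Lemma~\ref{Main lem}, namely $c_n=2$ when $n=3k$, $c_n=\tfrac{2k}{k+1}$ when $n=3k+1$, and $c_n=\tfrac{2k+1}{k+1}$ when $n=3k+2$.

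First I would record two elementary upper bounds. Every vertex of $D$ lies in a cell meeting $D$, hence in a blue, green, yellow, or orange cell, so $b+g+y+o=|D|=\Gamma$; and since each such coloured cell carries at least one vertex of $D$ of the matching colour, $b'+g'+y'+o'\le b+g+y+o=\Gamma$. Next, Lemma~\ref{Main lemma 3} gives $b'+r'\le b+g+y+o=\Gamma$, and Lemma~\ref{Main lem} gives $m'\le c_n\,\Gamma$. Adding these three inequalities and discarding the extra non-negative summand $b'$,
\[
b'+g'+y'+o'+r'+m'\ \le\ 2b'+g'+y'+o'+r'+m'\ \le\ (2+c_n)\,\Gamma .
\]
On the other hand, applying Lemma~\ref{Main lemma 1} with $Z=P_n$ (so $|V(Z)|=n$) gives $b'+g'+y'+o'+r'+m'\ge \gamma(X)\gamma(Y)\,n$. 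Putting the two bounds together yields $\gamma(X)\gamma(Y)\,n\le(2+c_n)\,\gamma(X\square Y\square P_n)$, i.e.
\[
\gamma(X\square Y\square P_n)\ \ge\ \frac{n}{2+c_n}\,\gamma(X)\gamma(Y).
\]
It then remains only to substitute $c_n$ and $n$ and check, in each residue class, that $n/(2+c_n)$ is exactly the claimed multiple of $\gamma(P_n)$: for $n=3k$ one has $2+c_n=4$, so $n/(2+c_n)=\tfrac{3k}{4}=\tfrac34\gamma(P_n)$; for $n=3k+1$ one has $2+c_n=\tfrac{4k+2}{k+1}$, so $n/(2+c_n)=\tfrac{(3k+1)(k+1)}{4k+2}=\tfrac{3k+1}{4k+2}\gamma(P_n)$; for $n=3k+2$ one has $2+c_n=\tfrac{4k+3}{k+1}$, so $n/(2+c_n)=\tfrac{(3k+2)(k+1)}{4k+3}=\tfrac{3k+2}{4k+3}\gamma(P_n)$.

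I do not expect a genuine obstacle at this stage: the real combinatorial content has already been isolated in Lemma~\ref{Main lem}, whose proof uses the observations $O_1$--$O_6$ to bound the number of maroon cells in a $Z$-fibre, and what is left here is bookkeeping. The two points deserving attention are that summing the three upper bounds produces a term $2b'$ rather than $b'$, which is harmless precisely because $b'\ge 0$, and that $\gamma(P_n)$ must be evaluated as $\lceil n/3\rceil$ case by case so that the coefficients $\tfrac34$, $\tfrac{3k+1}{4k+2}$, $\tfrac{3k+2}{4k+3}$ appear in exactly the stated form. As a sanity check, taking $k=0$ in the last two cases recovers the Clark and Suen bound (from $n=1$, where $P_1$ is a single vertex) and Theorem~\ref{Main Theorem} (from $n=2$), so the present statement is a common refinement of both.
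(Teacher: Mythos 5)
Your argument is correct and follows essentially the same route as the paper: both proofs combine Lemma \ref{Main lemma 1}, Lemma \ref{Main lemma 3} and Lemma \ref{Main lem} with the bounds $b+g+y+o=|D|$ and $b'+g'+y'+o'\le b+g+y+o$ to obtain $(2+c_n)\,\gamma(X\square Y\square P_n)\ge n\,\gamma(X)\gamma(Y)$, and then evaluate $\gamma(P_n)=\lceil n/3\rceil$ case by case. The only cosmetic difference is that you add the three upper bounds (absorbing the harmless extra $b'$) whereas the paper first substitutes Lemma \ref{Main lemma 3} into Lemma \ref{Main lemma 1}; the arithmetic in all three residue classes checks out.
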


\begin{proof}
When $Z=P_n,$ combining the two inequalities in Lemma \ref{Main lemma 1} and Lemma \ref{Main lemma 3}, we obtain that for any pair of graphs $X$ and $Y,$ we have
\begin{equation}\label{Eq5}
b+g+y+o+g'+y'+o'+m'\geq n\gamma (X) \gamma (Y) .
\end{equation} 
The inequalities obtained in this theorem result from Equation (\ref{Eq5}) and Lemma \ref{Main lem} and the fact that $\gamma(P_{3k})=k$ if $k\geq 1$ and $\gamma(P_{3k+1})=\gamma(P_{3k+2})=k+1$ for any non-negative integer $k.$ We show below how the first inequality is obtained and leave it to the reader to check the remaining two inequalities. If $n=3k,$ with $k\geq 1,$ by Equation \ref{Eq5} we have:
\begin{equation*}
b+g+y+o+g'+y'+o'+m'\geq 3k\gamma (X) \gamma (Y) .
\end{equation*}
But by Lemma \ref{Main lem}, if $n=3k,$ with $k\geq 1,$ we have
\begin{equation*}
2\gamma (X\square Y\square P_n)\geq m'.
\end{equation*}
Using these two inequalities and the fact that $\gamma (X\square Y\square P_n)=|D|=b+g+y+o\geq b'+g'+y'+o',$ we get:
\begin{equation*}
4\gamma (X\square Y\square P_n)\geq 3k\gamma (X) \gamma (Y).
\end{equation*}
The first inequality of Theorem \ref{Main Theorem} is then obtained using the fact that $\gamma(P_{3k})=k.$
\end{proof}

\end{document}